      \def\dC{{\mathbb C}}
      \def\dR{{\mathbb R}}
\def\cD{{\mathcal D}}      
   \def\cH{{\mathcal H}}   
   \def\cK{{\mathcal K}}   \def\cL{{\mathcal L}}
\def\cM{{\mathcal M}}   \def\cN{{\mathcal N}}   \def\cO{{\mathcal O}}
\def\cal H{{\mathcal H}}
\def\R{\mathbb{R}}
\def\C{\mathbb{C}}
\def\N{\mathbb{N}}
\def\Imag{{\text{\rm Im}}}
\def\ran{{\text{\rm ran\,}}}
\def\dom{{\text{\rm dom\,}}}
\def\spann{\textup{span}}
\def\Res{\textup{Res}}
\def\phi{\varphi}
\def\d{{d}}
\def\eps{\varepsilon}
\DeclareMathOperator{\supp}{supp}
\newtheorem{theorem}{Theorem}[section]
\newtheorem*{thm*}{Theorem}
\newtheorem{proposition}[theorem]{Proposition}
\newtheorem{corollary}[theorem]{Corollary}
\newtheorem{lemma}[theorem]{Lemma}
\theoremstyle{definition}
\newtheorem{definition}[theorem]{Definition} 
\newtheorem{remark}[theorem]{Remark}
\newtheorem{assumption}[theorem]{Assumption}
\numberwithin{equation}{section}
\title[An inverse problem of Calder\'on type with partial data]{An inverse problem of Calder\'on type with partial data}
\author{Jussi Behrndt \and Jonathan Rohleder}
\address{Institut f\"ur Numerische Mathematik \\
Technische Universit\"at Graz \\
Steyrergasse 30\\
A-8010 Graz\\
Austria}
\email{behrndt@tugraz.at, rohleder@tugraz.at}
\begin{document}

\begin{abstract}
A generalized variant of the Calder\'on problem from electrical impedance tomography with partial data for anisotropic Lipschitz 
conductivities is considered in an arbitrary space dimension $n \geq 2$. The following two results are shown: (i) The selfadjoint 
Dirichlet operator associated with an elliptic differential expression on a bounded Lipschitz domain is determined uniquely up to 
unitary equivalence by the knowledge of the Dirichlet-to-Neumann map on
an open subset of the boundary, and (ii) the Dirichlet operator can be  reconstructed from the residuals of  the Dirichlet-to-Neumann 
map on this subset.
\end{abstract}

\maketitle

\section{Introduction and main results}

Let $\cL$ be a uniformly elliptic and formally symmetric second order differential expression of the form
\begin{align}\label{expression}
 \cL = - \sum_{j,k = 1}^n \partial_j a_{jk} \partial_k + \sum_{j = 1}^n \big( a_j \partial_j - \partial_j \overline a_j \big)  + a
\end{align}
with variable coefficients on a bounded Lipschitz domain $\Omega$ in $\R^n$, $n \geq 2$. The main objective of the present paper is to show that the selfadjoint {\em Dirichlet operator}
\begin{align}\label{dirichlet}
 A u = \cL u, \quad \dom A = \left\{ u \in H^1_0(\Omega) : \cL u \in L^2 (\Omega) \right\},
\end{align}
associated with $\cL$ in $L^2(\Omega)$ is uniquely determined by a local variant of the  Dirichlet-to-Neumann map 
on some open subset $\omega$ of the boundary $\partial\Omega$, and that $A$ can be reconstructed
from the residuals of this  partial Dirichlet-to-Neumann map. Here the
{\em Dirichlet-to-Neumann map} $M(\lambda)$ is defined by
\begin{align}\label{DNmap}
  M(\lambda):H^{1/2}(\partial \Omega)\rightarrow H^{-1/2}(\partial \Omega),\qquad  u_\lambda\vert_{\partial\Omega}\mapsto \partial_\cL u_\lambda |_{\partial \Omega},
\end{align}
where $u_\lambda \in H^1(\Omega)$ is a solution of $\cL u = \lambda u$ and $\partial_\cL u_\lambda |_{\partial \Omega}$ denotes the conormal derivative of $u$ at $\partial \Omega$. 
The mapping $M(\lambda)$ is well-defined for all $\lambda$ in the resolvent set $\rho(A)$ of $A$; see Section~2 for further details.

The above inverse problem is closely connected to and inspired by the famous Calder\'on problem from electrical impedance tomography, where the aim is to  determine the isotropic conductivity $\gamma$ of an inhomogeneous body from current and voltage data measured on the surface or on parts of it. 
The classical Calder\'on problem corresponds to the special case  $a_{jk}=\gamma\delta_{jk}$,
$a_j = a = 0$, $1 \leq j, k \leq n$, in \eqref{expression}, and the knowledge of
$M(\lambda)$  on $\partial \Omega$ or $\omega\subset\partial\Omega$ for some $\lambda$ or at $\lambda = 0$; see \cite{C80}.

The Calder\'on problem has been a major challenge in the field of inverse problems for PDEs in the last three decades. The first positive 
results were obtained for $\omega = \partial \Omega$ using only Dirichlet and Neumann data for $\lambda = 0$ in the pioneering work 
\cite{SU87} in dimension $n \geq 3$ and smooth $\gamma$, see also \cite{NSU88} for $\gamma\in C^{1,1}(\overline\Omega)$
and \cite{N88} for the reconstruction of $\gamma$ from the boundary measurements. In the two-dimensional case the first main contribution 
was the solution of the problem for $\gamma\in W^{2,p}(\Omega)$ in \cite{N96}; later in \cite{AP06} conductivities 
in $L^\infty(\Omega)$ were allowed. 
For partial data given only on special subsets of $\partial \Omega$ uniqueness was shown in the recent works \cite{BU02, KSU07} 
for a $C^2$-function $\gamma$ and a reconstruction method was provided in \cite{NS10}, see also \cite{IUY10} for a generalization 
in the two-dimensional case. 
Also the more general case of an anisotropic conductivity $(a_{jk})_{j,k = 1}^n$ has been investigated; in this situation, 
the single coefficients are in general not uniquely determined. Nevertheless, uniqueness up to diffeomorphisms was first shown for 
real-analytic coefficients assuming knowledge of $M(0)$ in \cite{LU89} in dimension $n \geq 3$ and in \cite{S90,SU91} for $n = 2$;
more general cases were treated in \cite{ALP05,DKSU09,SU03}. In the publications \cite{LTU03,LU01} the 
related problem of determining a real-analytic Riemannian manifold from the given Dirichlet and Neumann boundary data on 
arbitrary open subsets of $\partial \Omega$ was considered. There uniqueness up to isometry in $n \geq 3$ and uniqueness 
of the conformal class in $n = 2$ was shown for partial data supported in an open subset of $\partial \Omega$; see also
\cite{B88,B97,BK92,DKSU09,KK98,KKL01,KL06} for closely related problems as, e.g., the multidimensional Gelfand inverse spectral problem and
inverse problems for the wave equation with elliptic data.
For a detailed recent review and further references we also refer to \cite{U09}.

The aim of the present paper is to prove somewhat different, milder types of uniqueness and reconstruction 
results in space dimension $n \geq 2$ for partial data given on an open subset $\omega$ of $\partial \Omega$. 
Since the coefficients of $\cL$ are not uniquely determined in general we focus on the selfadjoint Dirichlet operator~\eqref{dirichlet} associated with $\cL$ on $\Omega$. In return this point of view onto the problem allows us to consider the more general differential expression~\eqref{expression} and to impose, in an arbitrary space dimension $n \geq 2$, the following comparatively weak assumptions on the coefficients of $\cL$.

\begin{assumption}\label{A1}
The coefficients $a_{jk}$ and $a_j$ are bounded Lipschitz functions on $\overline\Omega$ 
satisfying $a_{kj} = \overline{a_{jk}}$, $1 \leq j, k \leq n$, and $a \in L^\infty(\Omega)$ is real-valued. 
Moreover, $\cL$ is uniformly elliptic, i.e., $$\sum_{j,k=1}^n a_{jk}(x) \xi_j\xi_k\geq C \sum_{k=1}^n\xi_k^2 $$
holds for some $C>0$, all $\xi=(\xi_1,\dots\xi_n)^\top\in\dR^n$ and $x\in\overline\Omega$.
\end{assumption}

Furthermore, we impose the following conditions on the domain $\Omega$ and the subset $\omega$ of the boundary $\partial \Omega$ 
where Dirichlet and Neumann data is assumed to be given.

\begin{assumption}\label{A2}
$\Omega$ is a bounded Lipschitz domain in $\dR^n$, $n\geq 2$, and  $\omega$ is a nonempty, open subset of the boundary $\partial \Omega$.
\end{assumption}

The following two theorems on 
uniqueness and reconstruction of the Dirichlet operator $A$ in \eqref{dirichlet} from the knowledge of the
Dirichlet-to-Neumann map on $\omega\subseteq\partial\Omega$ are the main results in the present paper. 
The usual duality between
$H^{1/2}(\partial\Omega)$ and $H^{-1/2}(\partial\Omega)$ is denoted by $(\cdot,\cdot)$.

\begin{theorem}\label{uniqueness}
Let $\Omega$ and $\omega$ be as in Assumption~\ref{A2}, and let $\cL_1$ and $\cL_2$ be differential expressions on $\Omega$ of the form~\eqref{expression}
with coefficients $a_{jk,1}$, $a_{j,1}$, $a_1$ and $a_{jk,2}$, $a_{j,2}$, $a_2$ 
satisfying Assumption~\ref{A1}. Denote by $A_1$, $A_2$, and $M_1$, $M_2$ the corresponding Dirichlet operators and Dirichlet-to-Neumann maps, respectively. Assume that
\begin{align*}
\bigl(M_1(\lambda) \phi,\psi\bigr) = (M_2(\lambda) \phi,\psi)
\qquad \phi,\psi \in H^{1/2}(\partial \Omega), \,\,\supp \phi,\psi \subset \omega,
\end{align*}
holds for 
all $\lambda \in \cD$, where $\cD\subseteq \rho(A_1) \cap \rho(A_2)$ is a set of points which has an accumulation point in $\rho(A_1) \cap \rho(A_2)$. Then $A_1$ and $A_2$ are unitarily equivalent.
\end{theorem}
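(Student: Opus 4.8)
The plan is to recast the problem in terms of the $\gamma$-field and Weyl function associated with the quasi boundary triple of Section~2, for which the Weyl function coincides with the Dirichlet-to-Neumann map $M(\lambda)$, and then to reconstruct $A$ from $M$ up to unitary equivalence by an explicit functional model. First I would upgrade the hypothesis on the discrete set $\cD$ to equality of the compressed Dirichlet-to-Neumann maps on a full half-plane. For fixed $\phi,\psi$ with $\supp\phi,\psi\subset\omega$ the scalar functions $\lambda\mapsto(M_j(\lambda)\phi,\psi)$ are analytic on $\rho(A_j)$, so their difference is analytic on the connected component $\cO$ of $\rho(A_1)\cap\rho(A_2)$ containing the accumulation point of $\cD$; since it vanishes on $\cD$, the identity theorem gives $(M_1(\lambda)\phi,\psi)=(M_2(\lambda)\phi,\psi)$ for all $\lambda\in\cO$. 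As $\C\setminus\R\subseteq\rho(A_1)\cap\rho(A_2)$ and $\cO$ is open, $\cO$ contains the connected upper half-plane $\C_+$, so this equality holds for all $\lambda\in\C_+$ and all $\phi,\psi$ supported in $\omega$.

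Next I would isolate the part of $L^2(\Omega)$ that is ``seen'' from $\omega$. Writing $\gamma_j(\lambda)$ for the $\gamma$-field of $A_j$, set
\[
 \cH_j := \closp\bigl\{\gamma_j(\lambda)\phi : \lambda\in\C_+,\ \phi\in H^{1/2}(\partial\Omega),\ \supp\phi\subset\omega\bigr\}\subseteq L^2(\Omega).
\]
The identity $\gamma_j(\lambda)-\gamma_j(\mu)=(\lambda-\mu)(A_j-\lambda)^{-1}\gamma_j(\mu)$ shows that $\cH_j$ is invariant under $(A_j-\lambda)^{-1}$, hence reduces $A_j$. The key claim is $\cH_j=L^2(\Omega)$. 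Since $A_j$ has compact resolvent, if $\cH_j^\perp\neq\{0\}$ it contains an eigenfunction $u$ with $A_ju=\mu u$, $\mu\in\R$. Using $\gamma_j(\lambda)^*v=\partial_{\cL_j}\bigl[(A_j-\bar\lambda)^{-1}v\bigr]|_{\partial\Omega}$ for $v\in L^2(\Omega)$, one gets $\gamma_j(\lambda)^*u=(\mu-\bar\lambda)^{-1}\,\partial_{\cL_j}u|_{\partial\Omega}$, so the orthogonality $0=(\gamma_j(\lambda)\phi,u)_{L^2(\Omega)}=(\phi,\gamma_j(\lambda)^*u)$ for all admissible $\phi$ forces $\partial_{\cL_j}u|_\omega=0$. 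Together with $u|_{\partial\Omega}=0$ this says $u$ has vanishing Cauchy data on $\omega$, and unique continuation for the second order elliptic operator $\cL_j$ yields $u\equiv0$, a contradiction. I expect this density statement, which rests on unique continuation for operators with Lipschitz leading coefficients in arbitrary dimension, to be the main obstacle.

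With density in hand I would build the intertwining unitary directly on the generating vectors by $U\gamma_1(\lambda)\phi:=\gamma_2(\lambda)\phi$. Its isometry is precisely the equality of the compressed Weyl functions: from the standard relation $\gamma_j(\mu)^*\gamma_j(\lambda)=\bigl(M_j(\lambda)-M_j(\mu)^*\bigr)/(\lambda-\bar\mu)$ one obtains
\[
 \bigl(\gamma_j(\lambda)\phi,\gamma_j(\mu)\psi\bigr)_{L^2(\Omega)}
 =\frac{(M_j(\lambda)\phi,\psi)-\overline{(M_j(\mu)\psi,\phi)}}{\lambda-\bar\mu},
\]
and since $\phi,\psi$ are supported in $\omega$ and $\lambda,\mu\in\C_+$, the right-hand side is independent of $j$ by the first step. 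Hence $U$ is well defined and isometric on a dense subspace of $L^2(\Omega)$ and extends to a unitary operator on $L^2(\Omega)$.

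Finally I would verify that $U$ intertwines the resolvents. Using $(A_j-\lambda)^{-1}\gamma_j(\mu)\phi=\bigl(\gamma_j(\lambda)\phi-\gamma_j(\mu)\phi\bigr)/(\lambda-\mu)$, which is the same affine combination of generators for $j=1,2$, one gets $U(A_1-\lambda)^{-1}=(A_2-\lambda)^{-1}U$ on the dense generating set, hence on all of $L^2(\Omega)$, for every $\lambda\in\C_+$. Consequently $UA_1=A_2U$, that is, $A_1$ and $A_2$ are unitarily equivalent. Besides the unique continuation input, the remaining points that require care are the precise mapping properties and adjoint identities of $\gamma_j(\lambda)$ and $M_j(\lambda)$ in the quasi boundary triple framework of Section~2, and the bookkeeping of the $H^{1/2}$--$H^{-1/2}$ duality against the $L^2(\Omega)$ inner product in the isometry computation.
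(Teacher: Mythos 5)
Your proposal follows the same skeleton as the paper's proof: the identity theorem upgrades equality on $\cD$ to equality on all of $\rho(A_1)\cap\rho(A_2)$ (which is connected, so your ``component'' is everything), Lemma~\ref{lemma1}~(iii) turns equality of the compressed Dirichlet-to-Neumann maps into the isometry of $V:\gamma_1(\lambda)\phi\mapsto\gamma_2(\lambda)\phi$, a density statement makes its closure unitary, and Lemma~\ref{lemma1}~(ii) yields the resolvent intertwining. Where you genuinely diverge is the density statement (the paper's Lemma~\ref{simplicity}): the paper runs a Krein-type argument, introducing the restrictions $S$ (with $\partial_\cL u|_\omega=0$) and $T$, proving $T^*=S$, constructing a selfadjoint restriction $S_\cM$ of $S$ in $\cM=\bigl(\spann\{\cN_\lambda\}\bigr)^\perp$ and showing its spectrum is empty, whereas you let $\cH_j$ reduce $A_j$ and extract an eigenfunction in $\cH_j^\perp$ directly from discreteness of the spectrum. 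Given compact resolvent this is a legitimate and arguably simpler route. Two repairs are needed, though: invariance of $\cH_j$ under $(A_j-\lambda)^{-1}$ for $\lambda\in\dC^+$ alone does not give reduction --- you also need the conjugate points; since $\lambda\mapsto(\gamma_j(\lambda)\phi,v)$ is analytic on the connected set $\rho(A_j)$, orthogonality to $\cH_j$ propagates from $\dC^+$ to all of $\rho(A_j)$, so $\cH_j$ contains $\gamma_j(\lambda)\phi$ for every $\lambda\in\rho(A_j)$ and the reduction goes through. (Minor: the adjoint formula carries a sign, $\gamma(\lambda)' u = -\partial_\cL\bigl((A-\overline\lambda)^{-1}u\bigr)|_{\partial\Omega}$, which is harmless for your conclusion $\partial_{\cL_j}u|_\omega=0$.)

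The one genuine gap is the step you yourself flag: ``vanishing Cauchy data on $\omega$ plus unique continuation gives $u\equiv 0$.'' On a merely Lipschitz boundary this is not an off-the-shelf theorem: the classical results of Aronszajn and Wolff are \emph{interior} statements, for solutions of differential inequalities vanishing on an open subset of the domain; they do not directly apply to a solution in $\Omega$ whose Cauchy data vanish on a boundary portion. The paper's Proposition~\ref{eigenfunc} supplies exactly this bridge: extend $u$ by zero across $\omega$ into a larger Lipschitz domain $\widetilde\Omega$ with $\partial\Omega\setminus\omega\subset\partial\widetilde\Omega$ containing a ball $\cO\subseteq\widetilde\Omega\setminus\Omega$, extend the coefficients so that Assumption~\ref{A1} persists, and verify via the first Green identity that the extension $\widetilde u$ satisfies $\widetilde\cL\widetilde u=\lambda\widetilde u$ in the distributional sense --- this verification is precisely where both $u|_{\partial\Omega}=0$ and $\partial_\cL u|_\omega=0$ are consumed. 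Then $H^2_{\loc}$ regularity lets one rewrite the equation as a second-order differential inequality with bounded coefficients, and interior unique continuation from the ball where $\widetilde u=0$ forces $\widetilde u\equiv 0$. Without this extension-by-zero construction your appeal to unique continuation is a citation-shaped hole; with it, your argument is complete and is in substance the paper's proof with a streamlined density lemma.
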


We point out that Theorem~\ref{uniqueness} is a ``mild'' uniqueness result in the sense that 
even in the special case $a_{jk}=\gamma\delta_{jk}$, $a_j=a=0$,
$1\leq j,k\leq n$ and $\omega=\partial\Omega$ in the classical Calder\'on problem it does not imply uniqueness
of $\gamma$ as shown in 
\cite{AP06,BU02,KSU07,N88,N96,NS10,NSU88,SU87}. Under additional smoothness assumptions Theorem~\ref{uniqueness} can also be 
viewed as a consequence of the gauge equivalence of second order elliptic operators on manifolds determined from
their boundary spectral data in the multidimensional Gelfand inverse spectral problem; cf. \cite{B88,BK92,KK98,KKL01,KL06}.

In order to state our second main result we remark that $\lambda\mapsto M(\lambda)$
is a meromorphic operator function; the residual of the Dirichlet-to-Neumann map $M$ at a pole $\mu$ is 
a linear operator from $H^{1/2}(\partial\Omega)$ to $H^{-1/2}(\partial \Omega)$ which is denoted by $\Res_\mu M$. Since the aim in the next theorem is to use only knowledge of $M$ and its residuals on $\omega$, also the operator $\Res_\mu M$ is replaced by a local version. 
The residual $\Res_\mu^\omega M$ on $\omega$  
is defined on the linear subspace $H^{1/2}_\omega(\partial\Omega)$ of functions $\phi \in H^{1/2} (\partial \Omega)$ with $\supp \phi \subset \omega$ by
\begin{align*}
 \Res_\mu^\omega M: H^{1/2}_\omega(\partial\Omega) \to \bigl(H^{1/2}_\omega(\partial\Omega)\bigr)', 
\quad (\Res_\mu^\omega M \phi) (\psi) := (\Res_\mu M \phi,\overline\psi\,),
\end{align*}
where $\phi,\psi\in  H^{1/2}_\omega(\partial\Omega)$.
The next theorem provides a one-to-one correspondence between the eigenfunctions of the Dirichlet operator $A$ and the residuals of the Dirichlet-to-Neumann map $M$ on $\omega$ and, hence, yields a representation of $A$ in terms of these residuals.

\begin{theorem}\label{reconstruction}
Let Assumption~\ref{A1} and Assumption~\ref{A2} be satisfied and let 
$M(\lambda)$ be the Dirichlet-to-Neumann map associated with the elliptic differential expression $\cL$. 
Then the poles of $\lambda\mapsto M(\lambda)$ coincide with the eigenvalues of the Dirichlet operator $A$ and for each eigenvalue $\lambda_k$ of $A$ 
the corresponding eigenspace is given by
\begin{align*}
 \ker (A - \lambda_k) = \ran \left( \tau_k^{-1} \circ \Res_{\lambda_k}^\omega M \right),
\end{align*}
where $\tau_k$ denotes the restriction of the Neumann trace operator $u\mapsto\partial_\cL u\vert_\omega$ 
onto $\ker (A - \lambda_k)$. In particular, there exist $\phi_1^{(k)}, \dots, \phi_{n(k)}^{(k)} \in 
H^{1/2}(\partial \Omega)$ with support in $\omega$, such that the functions 
$$e_i^{(k)} := \tau_k^{-1} \bigl(\Res_{\lambda_k}^\omega M\bigr) \phi_i^{(k)},\qquad i=1,\dots,n(k),$$ form an orthonormal basis of $\ker (A - \lambda_k)$ and $A$ can be represented in the form
\begin{align*}
 A u = \sum_{k = 1}^\infty \lambda_k \sum_{i = 1}^{n(k)} \big(u, e_i^{(k)}\big)_{\!L^2(\Omega)}\, e_i^{(k)}, \qquad u \in \dom A.
\end{align*}
\end{theorem}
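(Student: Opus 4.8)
The plan is to work within the framework of Section~2, in which the Dirichlet-to-Neumann map $M(\lambda)$ is the Weyl function associated with the Dirichlet operator $A$, and the solution operator (the $\gamma$-field) $\gamma(\lambda)\colon H^{1/2}(\partial\Omega)\to L^2(\Omega)$ sends Dirichlet data $\phi$ to the solution $u_\lambda=\gamma(\lambda)\phi$ of $\cL u=\lambda u$ with $u|_{\partial\Omega}=\phi$, so that $M(\lambda)\phi=\partial_\cL(\gamma(\lambda)\phi)|_{\partial\Omega}$. The starting point is the elementary identity $\gamma(\lambda)=\bigl(I+(\lambda-\mu)(A-\lambda)^{-1}\bigr)\gamma(\mu)$ for $\lambda,\mu\in\rho(A)$, which follows because $u_\lambda-u_\mu\in\dom A$ and $(A-\lambda)(u_\lambda-u_\mu)=(\lambda-\mu)u_\mu$. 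Since $A$ has compact resolvent, this exhibits $\lambda\mapsto M(\lambda)$ as meromorphic with possible poles only at the eigenvalues of $A$, and near an eigenvalue $\lambda_k$ the singular behaviour is governed by the residue $-P_k$ of $(A-\lambda)^{-1}$, where $P_k$ is the orthogonal projection onto $\ker(A-\lambda_k)$.

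Next I would make the residue explicit. Writing $P_k=\sum_{i=1}^{n(k)}(\,\cdot\,,e_i^{(k)})_{L^2}\,e_i^{(k)}$ for an orthonormal basis $\{e_i^{(k)}\}$ of $\ker(A-\lambda_k)$ and feeding this into the singular part of $M$, a short computation using Green's identity for $\cL$ (here $e_i^{(k)}\in H^1_0(\Omega)$, so the Dirichlet trace term drops out) yields the clean formula $\Res_{\lambda_k}M\,\phi=-\sum_{i=1}^{n(k)}(\phi,\partial_\cL e_i^{(k)})\,\partial_\cL e_i^{(k)}$. Thus $\ran(\Res_{\lambda_k}M)$ is the linear span of the Neumann traces $\partial_\cL e_i^{(k)}|_{\partial\Omega}$ of the eigenfunctions. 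In particular the residue is nonzero at every eigenvalue: an eigenfunction whose Neumann trace vanishes on all of $\partial\Omega$ has vanishing Cauchy data and is therefore trivial. This shows that the poles of $M$ coincide exactly with the eigenvalues of $A$.

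The decisive step, and the one I expect to be the main obstacle, is the injectivity of $\tau_k$, i.e.\ that no nonzero eigenfunction in $\ker(A-\lambda_k)$ can have conormal derivative vanishing on the (possibly small) set $\omega$. Any $e\in\ker(A-\lambda_k)$ already has vanishing Dirichlet trace on $\partial\Omega$; if in addition $\partial_\cL e|_\omega=0$, then $e$ has vanishing Cauchy data on $\omega$, and a unique continuation principle for the Lipschitz-elliptic expression $\cL$ forces $e=0$. This is precisely the analytic input that lets partial data on $\omega\subsetneq\partial\Omega$ suffice, and securing it for merely Lipschitz coefficients is the heart of the matter; I would invoke the unique continuation statement recorded earlier or reduce to it. Granting injectivity, the restricted Neumann traces $\partial_\cL e_i^{(k)}|_\omega$, $i=1,\dots,n(k)$, are linearly independent, so $\tau_k^{-1}$ is well defined on $\ran\tau_k=\tau_k\bigl(\ker(A-\lambda_k)\bigr)$.

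Finally I would assemble the statement. Restricting the residue formula to $H^{1/2}_\omega(\partial\Omega)$ according to the definition of $\Res_{\lambda_k}^\omega M$ shows that $\ran(\Res_{\lambda_k}^\omega M)$ is spanned by the functionals induced by $\partial_\cL e_i^{(k)}|_\omega$, which is exactly $\ran\tau_k$; hence $\ran\bigl(\tau_k^{-1}\circ\Res_{\lambda_k}^\omega M\bigr)=\tau_k^{-1}\bigl(\tau_k(\ker(A-\lambda_k))\bigr)=\ker(A-\lambda_k)$, which is the asserted eigenspace identity. For each basis vector $e_i^{(k)}$, choosing $\phi_i^{(k)}\in H^{1/2}_\omega(\partial\Omega)$ with $\Res_{\lambda_k}^\omega M\,\phi_i^{(k)}=\tau_k(e_i^{(k)})$ gives $\tau_k^{-1}(\Res_{\lambda_k}^\omega M)\phi_i^{(k)}=e_i^{(k)}$, so the functions $e_i^{(k)}$ are recovered as claimed and form an orthonormal basis of the eigenspace. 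Since $A$ is selfadjoint with compact resolvent, its eigenfunctions constitute a complete orthonormal system in $L^2(\Omega)$, and the spectral theorem delivers the representation $Au=\sum_k\lambda_k\sum_i(u,e_i^{(k)})_{L^2(\Omega)}\,e_i^{(k)}$.
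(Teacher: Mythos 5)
Your proposal is correct, and while it shares the paper's overall skeleton --- the identities of Lemma~\ref{lemma1}, injectivity of $\tau_k$ via unique continuation, and the spectral theorem for the final representation --- it takes a genuinely different route through the key surjectivity step. The paper proves $\ran \Res^\omega_{\lambda_k} M = \ran \tau_k$ (Proposition~\ref{formulaEigenspace}) in two halves: the inclusion $\subseteq$ by writing the spectral projection $E_{\lambda_k}$ as a Cauchy integral and deriving $\Res^\omega_{\lambda_k} M \phi = (\eta - \lambda_k)(\overline\mu - \lambda_k)\,\gamma(\mu)^\prime E_{\lambda_k}\gamma(\eta)\phi$, and the inclusion $\supseteq$ by an approximation argument that requires the density result Lemma~\ref{simplicity} (the Krein-type simplicity argument, Steps 1--3) together with finite-dimensionality of the eigenspace. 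You instead diagonalize the residue: from Lemma~\ref{lemma1}~(iv) and $\Res_{\lambda_k}(A-\lambda)^{-1} = -E_{\lambda_k}$ one obtains the rank-$n(k)$ formula $\Res_{\lambda_k} M \phi = \sum_{i}\bigl(\phi, \partial_\cL e_i^{(k)}\bigr)\,\partial_\cL e_i^{(k)}$ (with the paper's duality conventions the sign comes out $+$, not $-$ as you wrote --- immaterial for range statements), and then surjectivity onto $\ran\tau_k$ is pure linear algebra: injectivity of $\tau_k$ (Proposition~\ref{eigenfunc}) says the functionals $\partial_\cL e_i^{(k)}|_\omega$ are linearly independent on $H^{1/2}_\omega(\partial\Omega)$, hence the coefficient map $\phi \mapsto \bigl((\phi,\partial_\cL e_1^{(k)}),\dots,(\phi,\partial_\cL e_{n(k)}^{(k)})\bigr)$ is onto $\dC^{n(k)}$ --- a one-line duality fact that you leave implicit but should state, since it is exactly what makes ``span of the $\partial_\cL e_i^{(k)}|_\omega$'' equal to the range rather than merely contain it. Your route therefore proves the reconstruction theorem without Lemma~\ref{simplicity} at all, which is a genuine simplification; the paper's heavier machinery is not wasted, though, since the density lemma is indispensable for Theorem~\ref{uniqueness}, and the Cauchy-integral identity for the residual is basis-free. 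One caution: the ``unique continuation statement'' you invoke for injectivity is not an off-the-shelf boundary UCP for merely Lipschitz coefficients --- the classical results \cite{A57,AKS62,H83,W93} are interior statements for differential inequalities, and the reduction (extend $\Omega$ across $\omega$, extend the eigenfunction by zero, verify via the first Green identity that the extension still solves the equation, then apply the interior result) is precisely the content of Proposition~\ref{eigenfunc}; invoking that proposition is legitimate, but your phrase ``reduce to it'' is where that real work sits.
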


The proofs  of our uniqueness and reconstruction results are based on the powerful interplay of modern operator theory with 
classical PDE techniques, as, e.g., unique continuation results from \cite{A57,AKS62,H83,W93}.
Some of the main ideas are inspired by abstract methods in extension and spectral theory of symmetric and selfadjoint 
operators as provided in \cite{AB09,BL07,BL10,DHMS06,DM91,DM95,DLS87,J00,LT77}. Further operator theoretic approaches 
to elliptic boundary value problems and related questions via Dirichlet-to-Neumann maps or other analytic operator 
functions can also be found in the recent publications \cite{AP04,AE10,AM08,BGW09,BHMNW09,BMNW08,GM08,GM09,GM11,GMZ07,GMZ07-2,R07,R09}. 
For general methods from extension theory of symmetric operators that are applied to elliptic PDEs we also refer to, e.g., 
 \cite{A00,EM03,GM09-2,G68,M10,MS07,P08,P07}, the monographs \cite{EE87,GBook09,LM72,M00}, 
and the references therein.

The main part of the present paper is devoted to the proofs of the two main results; along the proofs we show Proposition~\ref{eigenfunc} 
and Proposition~\ref{formulaEigenspace} on the eigenvalues and eigenspaces of the Dirichlet operator,
as well as a density result in Lemma~\ref{simplicity} which is of independent interest.
The paper closes with a short appendix, which summarizes some basic facts on unbounded operators in Hilbert spaces and on Banach space-valued 
analytic functions.

\section{Proofs of Theorem~\ref{uniqueness} and Theorem~\ref{reconstruction}}\label{proofs}

In this section we give complete proofs of the uniqueness and reconstruction theorems
from above. Instead of two single proofs the material is ordered in several smaller statements
which then lead to the proofs of the main results.

We fix some notation first. By $H^s(\Omega)$ and $H^s(\partial \Omega)$ we denote the Sobolev spaces of order $s \geq 0$ on $\Omega$ and 
$\partial \Omega$, respectively, and by $H_0^s(\Omega)$ the closure of the set of $C^\infty$-functions with compact support in $\Omega$ 
with respect to the $H^s$-norm. Further, $H^{-s} (\partial \Omega)$ denotes the dual space of $H^{s} (\partial \Omega)$; the duality is 
expressed via
\begin{equation}\label{duality}
f(\phi)=(f,\overline\phi)=\overline{(\overline\phi,f)},\qquad f\in H^{-s}(\partial \Omega),\,\,\,\phi\in H^{s}(\partial \Omega),
\end{equation}
which extends the $L^2$ inner product.
For the Lipschitz domain $\Omega$ we write $u |_{\partial \Omega}\in H^{1/2}(\partial \Omega)$ for the trace of $u \in H^1(\Omega)$ at 
the boundary $\partial \Omega$ and 
$\partial_\cL u |_{\partial \Omega}\in H^{-{1/2}}(\partial \Omega)$ for the conormal derivative 
or Neumann trace of $u$ 
at $\partial \Omega$ (with respect to the differential expression $\cL$), see, e.g., \cite[Chapter 4]{M00} for more details. 
Recall further that $H^1_0(\Omega)$ coincides with the kernel of the trace operator $u\mapsto u\vert_{\partial\Omega}$ on $H^1(\Omega)$.
In order to define a restriction 
of the conormal derivative to the nonempty, open subset $\omega\subset\partial\Omega$ 
let 
\begin{equation}\label{homega}
H_\omega^{1/2}(\partial\Omega)=\bigl\{\phi\in H^{1/2}(\partial\Omega):\supp\phi\subset\omega\bigr\}
\end{equation}
be the 
linear subspace of $H^{1/2} (\partial \Omega)$ which consists of functions with support in $\omega$.
The restriction $\partial_\cL u |_{\omega}$ of the Neumann trace 
$\partial_\cL u \vert_{\partial \Omega}$ to $\omega$  is defined as
\begin{equation}\label{partneumann}
 \bigl(\partial_\cL u |_{\omega}\bigr) (\phi) :=\bigl(\partial_\cL u |_{\partial \Omega}\bigr)(\phi)= 
\bigl(\partial_\cL u |_{\partial \Omega},\overline\phi \bigr), \qquad\phi \in  H_\omega^{1/2}(\partial\Omega).
\end{equation}

Let us recall some well-known properties of the Dirichlet operator associated to $\cL$, 
which can be found in, e.g., \cite[Chapter~VI]{EE87} and \cite[Chapter~4]{M00}.

\begin{proposition}\label{dirichletProp}
The Dirichlet operator $A$ in \eqref{dirichlet}
is a selfadjoint operator in $L^2(\Omega)$ and its spectrum $\sigma(A)$ consists of isolated (real) eigenvalues 
with finite-dimensional eigenspaces. The Dirichlet eigenvalues accumulate to $+\infty$ and are bounded from below.
\end{proposition}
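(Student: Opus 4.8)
The plan is to realize $A$ as the selfadjoint operator associated with the sesquilinear form generated by $\cL$, and then to read off the spectral properties from a compactness argument. First I would introduce the form
\begin{align*}
\mathfrak{a}[u,v] = {}& \sum_{j,k=1}^n \int_\Omega a_{jk}\,(\partial_k u)\,\overline{\partial_j v}\,dx \\
& {}+ \sum_{j=1}^n \int_\Omega \bigl( a_j\,(\partial_j u)\,\overline v + \overline{a_j}\,u\,\overline{\partial_j v}\bigr)\,dx + \int_\Omega a\, u\, \overline v\, dx
\end{align*}
on the dense domain $\dom\mathfrak{a} = H^1_0(\Omega)$, which arises formally by pairing $\cL u$ with $\overline v$ in $L^2(\Omega)$ and shifting one derivative onto $v$; the boundary contributions drop out because $v \in H^1_0(\Omega)$. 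The whole argument then amounts to verifying the hypotheses of the first representation theorem for $\mathfrak{a}$ and identifying the resulting operator with $A$.

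Next I would check symmetry, semiboundedness, and closedness. Symmetry of $\mathfrak{a}$ is a direct computation using the assumptions $a_{kj} = \overline{a_{jk}}$, $a = \overline a$, and the deliberately symmetric way in which the coefficients $a_j$ enter. For the lower bound I would invoke the uniform ellipticity from Assumption~\ref{A1} to estimate the principal part from below by $C\|\nabla u\|_{L^2}^2$, and then control the first-order terms via $2\,\Real\!\sum_j \int_\Omega a_j (\partial_j u)\overline u$, absorbing them by Young's inequality $|2ab| \le \eps\,a^2 + \eps^{-1}b^2$ with $\eps$ small relative to $C$. This yields a G\aa rding-type inequality $\mathfrak{a}[u,u] \ge c\,\|u\|_{H^1}^2 - C'\|u\|_{L^2}^2$, and together with the matching upper bound from boundedness of the coefficients it shows that the form norm is equivalent to the $H^1$-norm; closedness of $\mathfrak{a}$ then follows from completeness of $H^1_0(\Omega)$.

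The first representation theorem (Kato) now produces a unique selfadjoint, semibounded operator associated with $\mathfrak{a}$, and I would identify it with $A$ from \eqref{dirichlet}. This identification is clean here precisely because $\dom A$ is defined through the distributional condition $\cL u \in L^2(\Omega)$ rather than an $H^2$-regularity requirement: testing the distributional identity $\mathfrak{a}[u,v] = (\cL u, v)_{L^2(\Omega)}$ against $v \in C_c^\infty(\Omega)$, which is dense in $H^1_0(\Omega)$, shows that $u$ belongs to the form operator's domain with action $u \mapsto \cL u$ exactly when $\cL u \in L^2(\Omega)$. Finally, since $\Omega$ is bounded, extension by zero embeds $H^1_0(\Omega)$ compactly into $L^2(\Omega)$ (Rellich), and by the norm equivalence established above the resolvent of $A$ is compact. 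Hence $\sigma(A)$ consists of isolated real eigenvalues of finite multiplicity with no finite accumulation point, and the lower semiboundedness forces them to be bounded below and to accumulate only at $+\infty$.

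The step requiring the most care is the semiboundedness estimate, which is the technical heart that makes the whole form approach applicable on a merely Lipschitz domain; the absorption of the first-order terms must be carried out quantitatively so that the constant in front of $\|\nabla u\|_{L^2}^2$ stays strictly positive. By contrast, I expect the domain identification and the compactness argument to be essentially automatic, since the definition of $A$ already encodes the distributional meaning of $\cL u \in L^2(\Omega)$ and the compact embedding of $H^1_0(\Omega)$ holds for arbitrary bounded $\Omega$.
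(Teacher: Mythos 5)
Your proof is correct and is in substance the argument the paper relies on: the paper gives no proof of Proposition~\ref{dirichletProp}, recalling it as well known with references to \cite[Chapter~VI]{EE87} and \cite[Chapter~4]{M00}, where precisely your route is carried out --- the symmetric sesquilinear form on $H^1_0(\Omega)$ (the same form $\Phi_\cL$ the paper later introduces before Proposition~\ref{eigenfunc}), a G\aa rding inequality absorbing the first-order terms by Young's inequality, Kato's first representation theorem, and discreteness of the spectrum via compactness of the embedding $H^1_0(\Omega)\hookrightarrow L^2(\Omega)$. Your identification of the form operator with $A$ through the distributional meaning of $\cL u\in L^2(\Omega)$ and the density of $C_c^\infty(\Omega)$ in $H^1_0(\Omega)$ is also the standard step and is handled correctly, so nothing is missing.
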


The next lemma shows that the Dirichlet-to-Neumann map and the Poisson operator in Definition~\ref{poissondef} below are well-defined. 

\begin{lemma}\label{DNwelldef} 
For all $\lambda$ in the resolvent set $\rho(A)$ of $A$ and all $\phi \in H^{1/2}(\partial \Omega)$ there exists a unique solution $u_\lambda\in H^1(\Omega)$
of the boundary value problem
\begin{align}\label{BVP}
\cL u = \lambda u,\qquad  u |_{\partial \Omega} = \phi.
\end{align}
In particular, for all $\lambda\in\rho(A)$ the Dirichlet-to-Neumann map $M(\lambda)$ in \eqref{DNmap}
is well-defined.
\end{lemma}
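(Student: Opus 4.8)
The plan is to prove existence and uniqueness of the solution $u_\lambda$ to the boundary value problem \eqref{BVP} for $\lambda\in\rho(A)$ by reducing the inhomogeneous Dirichlet problem to an equation that the resolvent $(A-\lambda)^{-1}$ can solve. First I would fix $\phi\in H^{1/2}(\partial\Omega)$ and invoke the surjectivity of the Dirichlet trace operator $u\mapsto u|_{\partial\Omega}$ from $H^1(\Omega)$ onto $H^{1/2}(\partial\Omega)$ (the existence of a bounded right inverse, i.e. an extension operator); this yields some $w\in H^1(\Omega)$ with $w|_{\partial\Omega}=\phi$. The idea is then to seek $u_\lambda$ in the form $u_\lambda=w+v$, where $v\in H^1_0(\Omega)$ is the correction that enforces the equation $\cL u_\lambda=\lambda u_\lambda$ while leaving the boundary value unchanged, since $H^1_0(\Omega)$ is exactly the kernel of the trace operator.

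Next I would make the equation for $v$ precise in the weak sense. Writing $\cL u_\lambda=\lambda u_\lambda$ as $(\cL-\lambda)(w+v)=0$, the function $v$ must satisfy $(\cL-\lambda)v=-(\cL-\lambda)w$ with $v\in H^1_0(\Omega)$. By Assumption~\ref{A1} the expression $\cL$ is uniformly elliptic and formally symmetric, so $(\cL-\lambda)w$ defines (via the sesquilinear form associated with $\cL$) an element of the dual of $H^1_0(\Omega)$; more care is needed here because $w$ need not satisfy $\cL w\in L^2(\Omega)$. The clean way around this is to work entirely with the closed, densely defined, bounded-below sesquilinear form $\mathfrak{t}[u,v]=\sum_{j,k}(a_{jk}\partial_k u,\partial_j v)+\sum_j\big((a_j\partial_j u,v)+(u,a_j\partial_j v)\big)+(au,v)$ on $H^1_0(\Omega)$, whose associated selfadjoint operator is precisely $A$. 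I would then define $v$ as the solution of $\mathfrak{t}[v,\eta]-\lambda(v,\eta)=-\mathfrak{t}[w,\eta]+\lambda(w,\eta)$ for all $\eta\in H^1_0(\Omega)$.

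The existence and uniqueness of $v$ now follow from $\lambda\in\rho(A)$. The right-hand side $\eta\mapsto -\mathfrak{t}[w,\eta]+\lambda(w,\eta)$ is an antilinear bounded functional on $H^1_0(\Omega)$, and solving the form equation amounts to inverting $A-\lambda$ in the appropriate sense; since $\lambda$ lies in the resolvent set, $A-\lambda$ is a bijection from $\dom A$ onto $L^2(\Omega)$ with bounded inverse, and the form version extends this to a bijection onto the dual of $H^1_0(\Omega)$. This gives a unique $v\in H^1_0(\Omega)$, hence a unique $u_\lambda=w+v\in H^1(\Omega)$ with $u_\lambda|_{\partial\Omega}=\phi$ and $\cL u_\lambda=\lambda u_\lambda$ in $L^2(\Omega)$. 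For uniqueness I would note that the difference of two solutions lies in $H^1_0(\Omega)$, solves $(A-\lambda)\,(\cdot)=0$, and hence vanishes because $\lambda\in\rho(A)$. Once $u_\lambda\in H^1(\Omega)$ is obtained with $\cL u_\lambda=\lambda u_\lambda\in L^2(\Omega)$, its conormal derivative $\partial_\cL u_\lambda|_{\partial\Omega}\in H^{-1/2}(\partial\Omega)$ is well-defined by the Green/Gauss identity for Lipschitz domains (see \cite[Chapter~4]{M00}), so $M(\lambda)\phi:=\partial_\cL u_\lambda|_{\partial\Omega}$ is well-defined, establishing the final assertion.

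The main obstacle I anticipate is the technical bookkeeping of the trace and conormal-derivative theory on a merely Lipschitz domain: justifying that the extension $w$ exists with the right regularity, that the form $\mathfrak{t}$ indeed generates $A$ in the sense of the first representation theorem, and that the conormal derivative is well-defined as an $H^{-1/2}$-functional through a Green identity valid for Lipschitz boundaries. These points are standard but delicate, and I would lean on the cited references \cite{EE87} and \cite{M00} rather than reprove them.
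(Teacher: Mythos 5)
Your proof is correct in outline but takes a genuinely different route from the paper. The paper's proof is two lines: since $\lambda\in\rho(A)$, any $H^1$-solution of the homogeneous problem lies in $H^1_0(\Omega)$ (kernel of the trace map) with $\cL u=\lambda u\in L^2(\Omega)$, hence belongs to $\ker(A-\lambda)=\{0\}$; the unique solvability of the inhomogeneous problem is then obtained by citing the Fredholm-alternative result \cite[Theorem~4.10]{M00}. You instead reprove that solvability from scratch: lift $\phi$ to $w\in H^1(\Omega)$, reduce to a variational problem for $v\in H^1_0(\Omega)$, and invert the form realization of $A-\lambda$. What the paper's citation buys is brevity and the avoidance of exactly the one step you assert without justification; what your approach buys is a self-contained argument that makes visible where the hypothesis $\lambda\in\rho(A)$ enters.

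The step you should shore up is the claim that, for every $\lambda\in\rho(A)$, ``the form version extends this to a bijection onto the dual of $H^1_0(\Omega)$.'' This is true but not automatic: for $\lambda$ non-real or in a gap between Dirichlet eigenvalues the form $\mathfrak{t}-\lambda$ is not coercive, so Lax--Milgram does not apply directly, and one needs either Riesz--Schauder theory via the compact embedding $H^1_0(\Omega)\hookrightarrow L^2(\Omega)$ (which is precisely the content of the McLean theorem the paper cites, so asserting it without proof would make your argument circular) or the following short two-step fix. By G\r{a}rding's inequality choose $\mu\in\R$ with $\mathfrak{t}-\mu$ coercive on $H^1_0(\Omega)$ and solve, by Lax--Milgram, $\mathfrak{t}[w_0,\eta]-\mu(w_0,\eta)=F(\eta)$ for the given functional $F$; then set $z=(\lambda-\mu)(A-\lambda)^{-1}w_0$, which is legitimate because $w_0\in L^2(\Omega)$ and $\lambda\in\rho(A)$, and check that $v=w_0+z$ solves $\mathfrak{t}[v,\eta]-\lambda(v,\eta)=F(\eta)$ for all $\eta\in H^1_0(\Omega)$. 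With that insertion your proof is complete; your uniqueness argument and the final observation that $\cL u_\lambda=\lambda u_\lambda\in L^2(\Omega)$ makes the conormal derivative $\partial_\cL u_\lambda|_{\partial\Omega}\in H^{-1/2}(\partial\Omega)$ well defined agree with the paper's setup.
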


\begin{proof}
Let $\lambda \in \rho(A)$ and $\phi \in H^{1/2} (\partial \Omega)$. Then the homogeneous problem
\begin{align*}
 (\cL - \lambda) u = 0, \quad u |_{\partial \Omega} = 0,
\end{align*}
has only the trivial solution, and by~\cite[Theorem~4.10]{M00} it follows that the inhomogeneous 
problem~\eqref{BVP} has a unique solution. 
\end{proof}

Besides the Dirichlet-to-Neumann map
a Poisson operator which maps functions on $\partial \Omega$
onto the corresponding solutions of \eqref{BVP} will play an important role. 

\begin{definition}\label{poissondef}
Let $\lambda \in \rho(A)$. The {\em Poisson operator} is defined by 
\begin{align}\label{Poisson}
\gamma(\lambda):H^{1/2} (\partial \Omega)\rightarrow L^2(\Omega),\qquad u_\lambda\vert_{\partial \Omega} \mapsto u_\lambda,
\end{align}
where $u_\lambda\in H^1(\Omega)$ is the unique solution of~\eqref{BVP}
with $\phi=u_\lambda|_{\partial\Omega}$. The range of the restriction of
the Poisson operator to $ H_\omega^{1/2}(\partial\Omega)$ is denoted by $\cN_\lambda$,
\begin{equation}\label{NLambda}
 \cN_\lambda = \left\{ u \in H^1(\Omega) : \cL u = \lambda u,\, \supp (u|_{\partial \Omega}) \subset \omega\right\}.
\end{equation}
\end{definition}

The operator $\gamma(\lambda)$ is well-defined for each $\lambda \in \rho(A)$ by Lemma~\ref{DNwelldef} and the relation $M(\lambda)\phi = \partial_\cL(\gamma(\lambda)\phi)\vert_{\partial \Omega}$ holds
for all $\phi\in H^{1/2} (\partial \Omega)$.
Some properties and formulas for the Poisson operator and the Dirichlet-to-Neumann map will be given in the next lemma. Its proof is essentially based on the second Green identity
\begin{align}\label{GI}
 (\cL u, v) - (u, \cL v) = \bigl( u |_{\partial \Omega}, \partial_\cL v |_{\partial \Omega} \bigr)
 - \bigl( \partial_\cL u |_{\partial \Omega}, v |_{\partial \Omega} \bigr)
\end{align}
for $u, v \in H^1(\Omega)$ satisfying $\cL u, \cL v \in L^2(\Omega)$, see, e.g.,~\cite{M00}. Here $(\cdot, \cdot)$ on the left hand side denotes the inner product in $L^2(\Omega)$ and on the right hand side the 
duality between $H^{-1/2} (\partial \Omega)$ and $H^{1/2} (\partial \Omega)$; cf. \eqref{duality}.
In the following it will be clear from the context whether the entries in $(\cdot,\cdot)$ are functions on $\Omega$
or $\partial\Omega$, respectively, so that no confusion can arise.  
 We remark that in 
a more abstract setting statements of similar form as in Lemma~\ref{lemma1} can be found in, e.g., \cite[Proposition~2.6]{BL07}, 
\cite[$\S$~1]{DM91}, and \cite[$\S$~2]{LT77}.

\begin{lemma}\label{lemma1}
Let $\lambda,\mu\in\rho(A)$, let $\gamma(\lambda)$, $\gamma(\mu)$ be the Poisson operators and let $M(\lambda)$, $M(\mu)$ be the Dirichlet-to-Neumann maps.  Then the following statements (i)-(iv) hold.
\begin{enumerate}
 \item $\gamma(\lambda)$ is bounded and the adjoint operator $\gamma(\lambda)^\prime:L^2(\Omega)\rightarrow H^{-1/2} (\partial \Omega)$ is given by
\begin{align*}
 \gamma(\lambda)' u = - \partial_\cL \left((A - \overline \lambda)^{-1} u \right)|_{\partial \Omega}, \qquad u \in L^2(\Omega).
\end{align*}
\item $\gamma(\lambda)$ and $\gamma(\mu)$ satisfy the identity
 \begin{align*}
  \gamma(\lambda) 
= \left( I + (\lambda - \mu) (A - \lambda)^{-1} \right) \gamma(\mu).
\end{align*}
 \item The Poisson operators and the Dirichlet-to-Neumann maps are connected via
 \begin{align*}
 (\overline\mu-\lambda)\bigl(\gamma(\lambda) \phi,\gamma(\mu)\psi\bigr) = (M(\lambda) \phi,\psi)-(\phi,M(\mu)\psi), 
 \end{align*}
and, in particular, $(M(\lambda)\phi,\psi)=(\phi,M(\overline\lambda)\psi)$ holds for all $\phi,\psi\in H^{1/2}(\partial \Omega)$.
\item $M(\lambda)$ is bounded, the function $\lambda\mapsto M(\lambda)$ is holomorphic on $\rho(A)$, and the identity 
\begin{equation*}
\begin{split}
(M(\lambda)\phi,\psi) & = (\phi,M(\lambda_0)\psi)\\ & \quad +
(\overline\lambda_0-\lambda)\Bigl(\bigl(I+(\lambda-\lambda_0)(A-\lambda)^{-1}\bigr)\gamma(\lambda_0)\phi,\gamma(\lambda_0)\psi\Bigr)
\end{split}
\end{equation*}
holds for all $\lambda,\lambda_0\in\rho(A)$ and $\phi,\psi\in H^{1/2}(\partial \Omega)$. 
In particular, every eigenvalue of $A$ is either a pole of order one or a removable singularity of 
the mapping $\lambda\mapsto M(\lambda)$.
\end{enumerate}
\end{lemma}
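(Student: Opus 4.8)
The plan is to treat the second Green identity~\eqref{GI} as the single workhorse and to feed it suitable pairs consisting of Poisson solutions and resolvent elements. For statement (i) I would first obtain boundedness of $\gamma(\lambda)$ from the well-posedness of~\eqref{BVP} in Lemma~\ref{DNwelldef}: the associated a priori estimate yields $\|u_\lambda\|_{H^1(\Omega)}\le C\|\phi\|_{H^{1/2}(\partial\Omega)}$, and composing with $H^1(\Omega)\hookrightarrow L^2(\Omega)$ gives continuity of $\gamma(\lambda)\colon H^{1/2}(\partial\Omega)\to L^2(\Omega)$. To identify the adjoint I fix $\phi\in H^{1/2}(\partial\Omega)$ and $u\in L^2(\Omega)$, put $v:=\gamma(\lambda)\phi$ and $w:=(A-\overline\lambda)^{-1}u$, and apply~\eqref{GI} to $v,w$. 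Since $\cL v=\lambda v$ and $v|_{\partial\Omega}=\phi$, while $w\in\dom A\subset H^1_0(\Omega)$ satisfies $\cL w=\overline\lambda w+u$ and $w|_{\partial\Omega}=0$, all boundary contributions except $(\phi,\partial_\cL w|_{\partial\Omega})$ drop out and the identity collapses to $(\gamma(\lambda)\phi,u)=(\phi,-\partial_\cL w|_{\partial\Omega})$, which is exactly the claimed formula once read through the duality~\eqref{duality}.

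I would derive (ii) by a direct calculation with the difference $w:=\gamma(\lambda)\phi-\gamma(\mu)\phi$: both terms share the boundary value $\phi$, so $w\in H^1_0(\Omega)$, and from $\cL w=\lambda w+(\lambda-\mu)\gamma(\mu)\phi\in L^2(\Omega)$ one reads off $w\in\dom A$ with $(A-\lambda)w=(\lambda-\mu)\gamma(\mu)\phi$; solving for $w$ and adding $\gamma(\mu)\phi$ produces the resolvent-type identity. For (iii) I apply~\eqref{GI} to $u:=\gamma(\lambda)\phi$ and $v:=\gamma(\mu)\psi$, so that the left-hand side equals $(\lambda-\overline\mu)(\gamma(\lambda)\phi,\gamma(\mu)\psi)$ and the right-hand side equals $(\phi,M(\mu)\psi)-(M(\lambda)\phi,\psi)$ via $M(\lambda)\phi=\partial_\cL(\gamma(\lambda)\phi)|_{\partial\Omega}$; rearranging gives the first formula, and the specialization $\mu=\overline\lambda$ (admissible because $\rho(A)$ is symmetric about $\dR$ by selfadjointness of $A$) makes the prefactor vanish and yields $(M(\lambda)\phi,\psi)=(\phi,M(\overline\lambda)\psi)$.

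Finally I would assemble (iv) from the previous parts. Boundedness of $M(\lambda)$ follows from boundedness of $\gamma(\lambda)$ into $\{u\in H^1(\Omega):\cL u\in L^2(\Omega)\}$ combined with continuity of the conormal trace into $H^{-1/2}(\partial\Omega)$, and substituting (ii) with $\mu=\lambda_0$ into (iii) gives the displayed representation of $(M(\lambda)\phi,\psi)$. From this representation each scalar map $\lambda\mapsto(M(\lambda)\phi,\psi)$ is holomorphic on $\rho(A)$ since $\lambda\mapsto(A-\lambda)^{-1}$ is, and this weak holomorphy upgrades to norm holomorphy of $\lambda\mapsto M(\lambda)$ via the standard criterion for Banach-space-valued analytic functions recorded in the appendix. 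For the pole structure I localize near an eigenvalue $\lambda_k$ of $A$ and fix some $\lambda_0\in\rho(A)$; expanding $I+(\lambda-\lambda_0)(A-\lambda)^{-1}$ in the representation isolates the only singular contribution, namely $(\overline\lambda_0-\lambda)(\lambda-\lambda_0)\bigl((A-\lambda)^{-1}\gamma(\lambda_0)\phi,\gamma(\lambda_0)\psi\bigr)$. Because $A$ is selfadjoint, $\lambda_k$ is a semisimple eigenvalue and $(A-\lambda)^{-1}$ has a first-order pole there, so multiplication by the entire prefactor leaves at most a simple pole; hence $\lambda_k$ is either a pole of order one or a removable singularity of $M$.

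The routine content is the four applications of~\eqref{GI}, amounting to careful bookkeeping of complex conjugates and of the sesquilinear duality~\eqref{duality}. I expect the genuine technical points to be, first, the exact mapping and continuity properties of the trace and conormal trace on the Lipschitz domain $\Omega$---in particular that $u\mapsto\partial_\cL u|_{\partial\Omega}$ is bounded from $\{u\in H^1(\Omega):\cL u\in L^2(\Omega)\}$ into $H^{-1/2}(\partial\Omega)$, which is what makes both the adjoint formula in (i) and the operator $M(\lambda)$ in (iv) meaningful---and, second, the weak-to-norm holomorphy step; both are classical but must be quoted from \cite{M00} and the appendix with the correct function spaces.
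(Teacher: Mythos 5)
Your proposal is correct, and on parts (i), (iii) and the representation formula in (iv) it runs along the same lines as the paper: the second Green identity \eqref{GI} applied to $\gamma(\lambda)\phi$ and $(A-\overline\lambda)^{-1}u$ gives the adjoint formula, applied to $\gamma(\lambda)\phi$ and $\gamma(\mu)\psi$ gives (iii), and substituting (ii) with $\mu=\lambda_0$ into (iii) gives (iv), with the pole statement read off from the first-order pole of the resolvent of the selfadjoint $A$ at an isolated eigenvalue. You deviate from the paper in two places, both legitimately. First, for (ii) the paper dualizes: it uses the adjoint formula from (i) together with the resolvent identity to compute $(\gamma(\lambda)\phi,u)-(\gamma(\mu)\phi,u)=\bigl((\lambda-\mu)(A-\lambda)^{-1}\gamma(\mu)\phi,u\bigr)$ for all $u\in L^2(\Omega)$ and then strips off $u$; you instead argue directly that $w:=\gamma(\lambda)\phi-\gamma(\mu)\phi$ has vanishing trace, hence $w\in H^1_0(\Omega)$ with $\cL w=\lambda w+(\lambda-\mu)\gamma(\mu)\phi\in L^2(\Omega)$, so $w\in\dom A$ and $(A-\lambda)w=(\lambda-\mu)\gamma(\mu)\phi$. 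Your route is more elementary and self-contained (it does not need (i) at all), at the small cost of verifying membership in $\dom A$, which here is immediate from $H^1_0(\Omega)=\ker(u\mapsto u|_{\partial\Omega})$. Second, for the boundedness of $\gamma(\lambda)$ and $M(\lambda)$ the paper uses soft arguments (closedness plus the closed graph theorem, and in (iv) the symmetry relation $(M(\lambda)\phi,\psi)=(\phi,M(\overline\lambda)\psi)$), whereas you invoke the quantitative a priori estimate behind \cite[Theorem~4.10]{M00} and the continuity of the conormal trace from $\{u\in H^1(\Omega):\cL u\in L^2(\Omega)\}$ into $H^{-1/2}(\partial\Omega)$; this buys explicit norm bounds but leans on precisely the trace-regularity facts you correctly flag as the technical content. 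One minor citation slip: the paper's appendix does not actually record the weak-to-norm holomorphy criterion (Dunford's theorem) you appeal to --- it only recalls residuals and the pole structure of the resolvent --- so you should quote that criterion from the cited monographs, or, more simply, observe that the representation in (iv) exhibits $M(\lambda)$ as a constant operator plus $(\overline\lambda_0-\lambda)\,\gamma(\lambda_0)'\bigl(I+(\lambda-\lambda_0)(A-\lambda)^{-1}\bigr)\gamma(\lambda_0)$, so norm holomorphy is inherited directly from that of $\lambda\mapsto(A-\lambda)^{-1}$ with no weak-to-norm upgrade needed.
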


\begin{proof}
(i) In order to verify the formula for $\gamma(\lambda)^\prime$ we show first
$$ (\gamma(\lambda) \phi, u) = \bigl(\phi, - \partial_\cL \bigl((A - \overline \lambda)^{-1} u \bigr) |_{\partial \Omega}\bigr)
\quad\text{for all}\,\,\,\phi \in H^{1/2}(\partial \Omega),\,\,\,u \in L^2(\Omega).$$
Let $\phi \in H^{1/2} (\partial \Omega)$, set $u_\lambda = \gamma(\lambda) \phi$ and let 
$u \in L^2(\Omega)$. One finds by~\eqref{GI} 
\begin{align*}
 (\gamma(\lambda) \phi, u) &= \left(u_\lambda, \left(I + \overline\lambda (A - \overline\lambda)^{-1} \right) u \right) - \left(\lambda u_\lambda, (A - \overline\lambda)^{-1} u \right)\\
 & = \left( u_\lambda, \cL (A - \overline\lambda)^{-1} u \right) - \left( \cL u_\lambda, (A - \overline\lambda)^{-1} u \right)\\
 & = \left(\partial_\cL u_\lambda |_{\partial \Omega}, (A - \overline\lambda)^{-1} u |_{\partial \Omega} \right) - 
\left( u_\lambda |_{\partial \Omega}, \partial_\cL \left((A - \overline\lambda)^{-1} u\right) |_{\partial \Omega} \right) \\
&=\left( \phi, -\partial_\cL \left((A - \overline\lambda)^{-1} u\right) |_{\partial \Omega} \right),
\end{align*}
where we have used $(A - \overline\lambda)^{-1} u |_{\partial \Omega}=0$ in the last equality. 
Then it follows with \eqref{duality} from
\begin{equation*}
(\gamma(\lambda)^\prime u)(\varphi)=(u,\overline{\gamma(\lambda)\phi})=(u,\gamma(\lambda)\overline\phi)
=- \bigl(\partial_\cL \bigl((A - \overline \lambda)^{-1} u \bigr) |_{\partial \Omega}\bigr)(\phi)
\end{equation*}
that $\gamma(\lambda)^\prime$ acts as
in the assertion and is defined on $L^2(\Omega)$. Moreover, the above reasoning also implies that $\gamma(\lambda)$ is closed and
hence bounded by the closed graph theorem.  

(ii) For $\lambda, \mu \in \rho(A)$, $\phi\in H^{1/2} ({\partial \Omega})$ and $u \in L^2(\Omega)$ we find by~(i)
\begin{align*}
(\gamma(\lambda)\phi,u) - (\gamma(\mu)\phi,u) &= 
\bigl(\phi,- \partial_\cL ((A - \overline \mu)^{-1}(\overline \lambda - \overline \mu) (A - \overline \lambda)^{-1} u )\vert_{\partial \Omega}\bigr)\\ 
 &= \bigl(\gamma(\mu)\phi, (\overline \lambda - \overline \mu) (A - \overline \lambda)^{-1} u\bigr)\\
 &= \bigl((\lambda-\mu)(A-\lambda)^{-1} \gamma(\mu) \phi,u\bigr).
\end{align*}
Hence we have $\gamma(\lambda)\phi=\gamma(\mu)\phi+(\lambda-\mu)(A-\lambda)^{-1} \gamma(\mu) \phi$,
which shows (ii).

(iii) Let $\lambda, \mu \in \rho(A)$ and $\phi, \psi \in H^{1/2} ({\partial \Omega})$, and set $u_\lambda = \gamma(\lambda) \phi$ and $v_\mu = \gamma(\mu) \psi$. Again by~\eqref{GI} we find
\begin{align*}
(\overline\mu-\lambda)\bigl( \gamma(\lambda) \phi, \gamma(\mu) \psi \bigr) = 
\left(u_\lambda, \cL v_\mu\right)-\left(\cL u_\lambda,v_\mu \right) 
= (M(\lambda) \phi, \psi)-(\phi, M(\mu) \psi).
\end{align*}

(iv) From $(M(\lambda)\phi,\psi)=(\phi,M(\overline\lambda)\psi)$ for $\lambda\in\rho(A)$ and $\phi,\psi\in H^{1/2}({\partial \Omega})$ it follows that $M(\lambda):H^{1/2}({\partial \Omega})\rightarrow
H^{-1/2}({\partial \Omega})$ is closed and hence bounded by the closed graph theorem. Furthermore,
by (iii) we have
\begin{equation*}
(M(\lambda)\phi,\psi)=(\phi,M(\lambda_0)\psi)+
(\overline\lambda_0-\lambda)\bigl(\gamma(\lambda)\phi,\gamma(\lambda_0)\psi\bigr),
\end{equation*}
which together with (ii) implies the formula in (iv). The remaining statement in (iv)
follows from this and the corresponding properties of the resolvent of $A$, see also Appendix.
\end{proof}

The statement in Lemma~\ref{lemma1}~(iv) on the singularities of $M$ and the eigenvalues of 
$A$ will be improved later in Corollary~\ref{charEigenvalues}, where it turns out that the Dirichlet eigenvalues coincide with the poles of the 
meromorphic operator function $\lambda\mapsto M(\lambda)$ and its restriction on $\omega$.

The following proposition is essential for the proofs of our main results.
It states that the restriction of the Dirichlet operator $A$ onto 
$\{u\in \dom A:\partial_\cL u\vert_\omega=0\}$ is an operator without eigenvalues. In an operator theoretic language this
implies that this restriction is a {\it simple} symmetric operator. 
The main ingredient in the proof of Proposition~\ref{eigenfunc} is a classical unique continuation theorem for solutions of second order elliptic differential inequalities, see, e.g., \cite{W93} and \cite{A57,AKS62,H83}. Moreover, the proof makes use of the sesquilinear form $\Phi_\cL(\cdot, \cdot)$ induced by $\cL$ on $H^1(\Omega)$,
\begin{align*}
 \Phi_\cL (u, v) := \int_\Omega \Big( \sum_{j,k = 1}^n a_{jk} \partial_k u \cdot \overline{\partial_j v} + \sum_{j = 1}^n \big( a_j (\partial_j u)\cdot  \overline v + \overline a_j u\cdot \overline{\partial_j v}\big)  + a u \overline v \Big) \d x
\end{align*}
for $u, v \in H^1(\Omega)$, and the corresponding first Green identity
\begin{align}\label{Green1}
 \Phi_\cL ( u, v) = (\cL u, v) + (\partial_\cL u |_{\partial \Omega}, v |_{\partial \Omega})
\end{align}
for all $u, v \in H^1(\Omega)$ satisfying $\cL u\in L^2(\Omega)$; see~\cite[Chapter 4]{M00}.

\begin{proposition}\label{eigenfunc}
There exists no eigenfunction $u$ of the Dirichlet operator $A$ satisfying $\partial_\cL u\vert_\omega=0$.
\end{proposition}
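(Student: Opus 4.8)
The plan is to argue by contradiction. Suppose $u$ is a nontrivial eigenfunction of $A$, say $Au=\cL u=\lambda u$ with $u\in\dom A$, so that in particular $u|_{\partial\Omega}=0$, and assume in addition $\partial_\cL u|_\omega=0$. The idea is to turn $u$ into a solution of an elliptic equation on a domain that reaches across $\omega$ and then to invoke the unique continuation principle. Concretely, I would fix a point $x_0\in\omega$ and choose an open ball $B=B(x_0,r)$ so small that $\overline B\cap\partial\Omega\subset\omega$. Setting $\widetilde\Omega:=\Omega\cup B$, I would extend $u$ by zero across $\omega$, i.e.\ define $\widetilde u:=u$ on $\Omega$ and $\widetilde u:=0$ on $B\setminus\overline\Omega$. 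Because the Dirichlet trace $u|_{\partial\Omega}$ vanishes on $\omega\supset\overline B\cap\partial\Omega$, the two pieces match in the trace sense and $\widetilde u\in H^1(\widetilde\Omega)$. Simultaneously I would extend the coefficients $a_{jk},a_j,a$ from $\overline\Omega$ to $\widetilde\Omega$, keeping the Lipschitz/boundedness and uniform ellipticity properties of Assumption~\ref{A1}; this yields an expression $\widetilde\cL$ on $\widetilde\Omega$ agreeing with $\cL$ on $\Omega$.

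The key step is to show that $\widetilde u$ is a weak solution of $\widetilde\cL\widetilde u=\lambda\widetilde u$ on all of $\widetilde\Omega$. Given a test function $v\in C_0^\infty(\widetilde\Omega)$, its support meets $\partial\Omega$ only inside $B$, so the trace $v|_{\partial\Omega}$ is supported in $\overline B\cap\partial\Omega\subset\omega$. Since $\widetilde u$ vanishes outside $\Omega$, the form $\Phi_{\widetilde\cL}(\widetilde u,v)$ reduces to $\Phi_\cL(u,v|_\Omega)$, and the first Green identity \eqref{Green1} gives $\Phi_\cL(u,v|_\Omega)=(\cL u,v|_\Omega)+(\partial_\cL u|_{\partial\Omega},v|_{\partial\Omega})$. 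The first term equals $\lambda(u,v|_\Omega)=\lambda(\widetilde u,v)_{L^2(\widetilde\Omega)}$ because $\cL u=\lambda u$. The boundary term vanishes: as $v|_{\partial\Omega}$ has support in $\omega$, the pairing $(\partial_\cL u|_{\partial\Omega},v|_{\partial\Omega})$ is, by the definition \eqref{partneumann} of the restricted Neumann trace, an evaluation of $\partial_\cL u|_\omega$, which is zero by hypothesis. Hence $\Phi_{\widetilde\cL}(\widetilde u,v)=\lambda(\widetilde u,v)_{L^2(\widetilde\Omega)}$ for every $v\in C_0^\infty(\widetilde\Omega)$, i.e.\ $\widetilde\cL\widetilde u=\lambda\widetilde u$ weakly, with $\widetilde u\in H^1(\widetilde\Omega)$ and $\widetilde\cL\widetilde u\in L^2(\widetilde\Omega)$.

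Finally I would apply unique continuation. The function $\widetilde u$ solves the second order elliptic equation $\widetilde\cL\widetilde u=\lambda\widetilde u$ on the connected open set $\widetilde\Omega$ and vanishes identically on the nonempty open set $B\setminus\overline\Omega$. Rewriting the equation as a differential inequality $|\sum_{j,k}\partial_j(\widetilde a_{jk}\partial_k\widetilde u)|\le C(|\widetilde u|+|\nabla\widetilde u|)$ for the (Lipschitz) principal part, the classical unique continuation theorem for elliptic inequalities from \cite{W93} (see also \cite{A57,AKS62,H83}) forces $\widetilde u\equiv 0$ on $\widetilde\Omega$, since $\widetilde\Omega$ is connected ($\Omega$ being a domain). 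In particular $u\equiv 0$ on $\Omega$, contradicting the assumption that $u$ is an eigenfunction.

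The main technical obstacle is the boundary construction: one must extend the coefficients across the Lipschitz boundary while retaining both formal symmetry $\widetilde a_{kj}=\overline{\widetilde a_{jk}}$ and uniform ellipticity, and one must justify carefully that the zero extension is a genuine $H^1$-solution on $\widetilde\Omega$ — this is precisely where the vanishing of both Cauchy data, $u|_\omega=0$ and $\partial_\cL u|_\omega=0$, enters (the Dirichlet condition secures $\widetilde u\in H^1$, the Neumann condition kills the boundary term in Green's identity). Connectedness of $\Omega$ is likewise essential, since otherwise an eigenfunction supported on a component not meeting $\omega$ would trivially satisfy $\partial_\cL u|_\omega=0$.
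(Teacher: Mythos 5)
Your proposal is correct and takes essentially the same route as the paper: extend $u$ by zero across $\omega$ into an enlarged domain, use the first Green identity together with the vanishing Cauchy data $u|_{\partial\Omega}=0$ and $\partial_\cL u|_\omega=0$ to show the extension solves $\widetilde\cL\widetilde u=\lambda\widetilde u$ weakly, and then conclude $\widetilde u\equiv 0$ by classical unique continuation from the open set where it vanishes. The only inessential differences are geometric and cosmetic: the paper enlarges $\Omega$ along all of $\omega$ to a Lipschitz domain $\widetilde\Omega$ with $\partial\Omega\setminus\omega\subset\partial\widetilde\Omega$ rather than attaching a single small ball, and it makes explicit the interior regularity step ($\widetilde u\in H^2_{\loc}$, \cite[Theorem~4.16]{M00}) needed to pass from the weak equation to the pointwise differential inequality $\bigl|\sum_{j,k}\widetilde a_{jk}\partial_j\partial_k\widetilde u\bigr|\leq\alpha|\widetilde u|+\beta\sum_k|\partial_k\widetilde u|$ in non-divergence form, which you leave implicit.
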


\begin{proof}
Let $\widetilde\Omega\supsetneq \Omega$ be 
a bounded Lipschitz domain such that 
$\partial \Omega \setminus \omega\subset \partial \widetilde\Omega$ and 
$\widetilde \Omega \setminus \Omega$ contains an open ball $\cO$. We extend  the coefficients $a_{jk}$, $a_j$, and $a$ of 
the differential expression $\cL$ from~\eqref{expression} to functions $\widetilde a_{jk}$, $\widetilde a_j$, and $\widetilde a$ 
on $\widetilde \Omega$ such that Assumption~\ref{A1} holds for the
corresponding differential expression $\widetilde \cL$ in $\widetilde\Omega$ defined as in \eqref{expression}. 

Assume that there exists $\lambda$ and $u \neq 0$ in the domain of the Dirichlet operator~\eqref{dirichlet} with 
$\cL u = \lambda u$ on $\Omega$ and $\partial_\cL u |_\omega = 0$. Since we have $u |_{\partial\Omega} =  0$, we 
can extend $u$ by zero on 
$\widetilde \Omega \setminus \Omega$ to a function $\widetilde u \in H^1(\widetilde \Omega)$. Moreover, 
$\widetilde \cL \widetilde u \in L^2(\Omega)$ and $\widetilde \cL \widetilde u = \lambda \widetilde u$ holds. 
In fact, we compute for $\widetilde \phi \in C_0^\infty (\widetilde \Omega)$
\begin{align*}
 \bigl( \widetilde \cL \widetilde u - \lambda \widetilde u \bigr) (\widetilde \phi)  
= \Phi_{\widetilde \cL} (\widetilde u, \overline{\widetilde \phi}\,) - \lambda (\widetilde u, \overline{\widetilde \phi}\,),
\end{align*}
where the left hand side is understood in the sense of distributions
and the right hand side consists of integrals on $\widetilde \Omega$ with integrands vanishing outside of $\Omega$. Denoting the restriction of $\widetilde \phi$ to $\Omega$ by $\phi$, it follows with the help of the first Green identity~\eqref{Green1} that
\begin{equation*}
 \bigl( \widetilde \cL \widetilde u - \lambda \widetilde u \bigr) (\widetilde \phi)  
= \Phi_\cL (u, \overline\phi) - \lambda (u, \overline\phi) = (\cL u, \overline\phi) + 
\bigl(\partial_\cL u |_{\partial \Omega}, \overline \phi |_{\partial \Omega}\bigr) - \lambda (u, \overline\phi).
\end{equation*}
Since $\cL u=\lambda u$, $\supp (\overline \phi |_{\partial \Omega}) \subset \omega$
and $\partial_\cL u |_\omega = 0$ we conclude together with \eqref{partneumann}
\begin{equation*}
 \bigl( \widetilde \cL \widetilde u - \lambda \widetilde u \bigr) (\widetilde \phi)
=\bigl(\partial_\cL u |_{\partial \Omega}, \overline \phi |_{\partial \Omega}\bigr)=
\bigl(\partial_\cL u\vert_w\bigr)(\phi\vert_{\partial\Omega})=0.
\end{equation*}
Hence $\widetilde \cL \widetilde u \in L^2(\widetilde \Omega)$ and $\widetilde \cL \widetilde u = \lambda \widetilde u$ hold; in particular, 
$\widetilde u$ is locally in $H^2$, see, e.g.,~\cite[Theorem~4.16]{M00}. Furthermore, we obtain
\begin{align*}
 - \sum_{j,k = 1}^n \widetilde a_{jk} \partial_j \partial_k \widetilde u =  \Big(\lambda - \widetilde a + \sum_{j = 1}^n \partial_j \overline{ \widetilde a }_j \Big) \widetilde u + \sum_{k = 1}^n \Big( \sum_{j = 1}^n\partial_j \widetilde a_{jk} - \widetilde a_k + \overline{\widetilde a}_k \Big) (\partial_k \widetilde u).
\end{align*}
Since the functions $\widetilde a_{jk}$ and $\widetilde a_j$, $1 \leq j, k \leq n$, together with their derivatives of first order as well as $\widetilde a$ are bounded on $\Omega$ and $\widetilde u=0$ on $\widetilde\Omega\setminus\Omega$ there
exist constants $\alpha$ and $\beta$ such that
\begin{equation}\label{ineq}
 \bigg| \sum_{j,k = 1}^n \widetilde a_{jk} \partial_j \partial_k \widetilde u \bigg| \leq \alpha |\widetilde u| + \beta \sum_{k = 1}^n |\partial_k \widetilde u|
\end{equation}
holds a.e.\ on $\widetilde\Omega$. As $\widetilde u=0$ on $\cO$ it follows from the differential inequality \eqref{ineq} and classical unique continuation results that $\widetilde u$ vanishes identically on $\widetilde\Omega$; cf. \cite{W93}.
In particular, we conclude $u = 0$ on $\Omega$, a contradiction, since $u$ was chosen to be an eigenfunction of $A$.
\end{proof}

Our last preparatory lemma will establish, as a consequence of Proposition~\ref{eigenfunc}, a density statement on the 
ranges $\cN_\lambda$ of the Poisson operators $\gamma(\lambda)$ in~\eqref{Poisson} restricted to $H_\omega^{1/2}(\partial\Omega)$. Recall that $\cN_\lambda$ is the space of solutions $u$ of the 
boundary value problem~\eqref{BVP} which satisfy $\supp (u |_{\partial \Omega}) \subset \omega$.

\begin{lemma}\label{simplicity}
Let $\cO\subseteq\dC^+$ be an open set and let $\cO^*=\{\lambda\in\dC:\overline\lambda\in\cO\}$.
Then 
\begin{align*}
  \spann \bigl\{ \cN_\lambda : \lambda \in \cO\cup \cO^* \bigr\} = \spann \bigl\{\gamma(\lambda)\phi:
\phi\in  H_\omega^{1/2}(\partial\Omega),\, \lambda \in \cO \cup \cO^* \bigr\}
\end{align*}
is a dense subspace of $L^2(\Omega)$.
\end{lemma}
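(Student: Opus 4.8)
The plan is to observe first that the asserted equality of the two spanning sets is immediate: by Definition~\ref{poissondef} the space $\cN_\lambda$ is precisely the range of $\gamma(\lambda)$ restricted to $H_\omega^{1/2}(\partial\Omega)$, so $\cN_\lambda=\{\gamma(\lambda)\phi:\phi\in H_\omega^{1/2}(\partial\Omega)\}$ and the two sets coincide. The entire content of the lemma is therefore the density assertion, which I would establish in the standard way, by showing that the orthogonal complement in $L^2(\Omega)$ of $\cS:=\spann\{\gamma(\lambda)\phi:\phi\in H_\omega^{1/2}(\partial\Omega),\ \lambda\in\cO\cup\cO^*\}$ is trivial.

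So suppose $f\in L^2(\Omega)$ satisfies $(\gamma(\lambda)\phi,f)=0$ for every $\phi\in H_\omega^{1/2}(\partial\Omega)$ and every $\lambda\in\cO\cup\cO^*$. Using the formula for the adjoint $\gamma(\lambda)'$ from Lemma~\ref{lemma1}~(i), this reads $(\phi,-\partial_\cL((A-\overline\lambda)^{-1}f)|_{\partial\Omega})=0$ for all such $\phi$; since it holds for every $\phi$ supported in $\omega$, it says exactly that $\partial_\cL((A-\overline\lambda)^{-1}f)|_\omega=0$, cf.~\eqref{partneumann}. Writing $z=\overline\lambda$ and using that $\cO\cup\cO^*$ is invariant under conjugation, I obtain
\begin{equation*}
 \partial_\cL\bigl((A-z)^{-1}f\bigr)\big|_\omega=0\qquad\text{for all } z \text{ in the nonempty open set } \cO\cup\cO^*\subseteq\rho(A).
\end{equation*}

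Next I would use analyticity to propagate this from the open set to all of $\rho(A)$ and then read off information at each eigenvalue. Fix $\phi\in H_\omega^{1/2}(\partial\Omega)$ and consider the scalar function $g_\phi(z):=(\partial_\cL((A-z)^{-1}f)|_{\partial\Omega},\overline\phi)$. Since $z\mapsto(A-z)^{-1}f$ is holomorphic into $L^2(\Omega)$ and $\cL(A-z)^{-1}f=f+z(A-z)^{-1}f$ is holomorphic as well, $z\mapsto(A-z)^{-1}f$ is holomorphic in the graph norm of $A$; as the conormal derivative is bounded with respect to that norm, $g_\phi$ is holomorphic on the connected set $\rho(A)=\dC\setminus\sigma(A)$. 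It vanishes on the open subset $\cO\cup\cO^*$, so by the identity theorem $g_\phi\equiv0$ on $\rho(A)$. Regarding $g_\phi$ as a meromorphic function on $\dC$ whose only singularities are the simple poles of the resolvent at the eigenvalues $\lambda_k$, every residue must vanish. Since $\Res_{\lambda_k}(A-z)^{-1}=-P_k$, where $P_k$ is the orthogonal projection onto $\ker(A-\lambda_k)$, the residue of $g_\phi$ at $\lambda_k$ equals $-(\partial_\cL(P_k f)|_{\partial\Omega},\overline\phi)$, and letting $\phi$ range over $H_\omega^{1/2}(\partial\Omega)$ yields $\partial_\cL(P_k f)|_\omega=0$ for every $k$.

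Finally, $P_k f$ lies in $\ker(A-\lambda_k)$ and has vanishing conormal derivative on $\omega$, so by Proposition~\ref{eigenfunc} it cannot be a nonzero eigenfunction; hence $P_k f=0$ for all $k$. Because $A$ is selfadjoint with purely discrete spectrum (Proposition~\ref{dirichletProp}), its eigenfunctions form an orthonormal basis of $L^2(\Omega)$, so $f=\sum_k P_k f=0$. This shows $\cS^\perp=\{0\}$ and thus that $\cS$ is dense. I expect the main obstacle to be the analytic-continuation-and-residue step: one must justify the holomorphy of the $H^{-1/2}(\partial\Omega)$-valued map $z\mapsto\partial_\cL((A-z)^{-1}f)|_{\partial\Omega}$ carefully and then identify the residue at each pole with the Neumann trace of the eigenprojection, which is precisely what makes Proposition~\ref{eigenfunc} applicable.
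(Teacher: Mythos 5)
Your proposal is correct, but it takes a genuinely different route from the paper. The paper's proof is extension-theoretic: it introduces the restriction $S$ of $A$ by the extra condition $\partial_\cL u|_\omega=0$ and the operator $T$ with Dirichlet data supported in $\omega$, proves $T^*=S$ (so that $\cN_\lambda=\ker(T-\lambda)$ is dense in $\ker(S^*-\lambda)$), then, following Krein, shows that $\cM=\bigl(\spann\{\cN_\lambda:\lambda\in\cO\cup\cO^*\}\bigr)^\perp=\bigcap_{\lambda\in\cO\cup\cO^*}\ran(S-\lambda)$ reduces $S$ to a selfadjoint operator $S_\cM$ in $\cM$, and concludes from Proposition~\ref{eigenfunc} together with the discreteness of $\sigma(A)$ that $\sigma(S_\cM)=\emptyset$, hence $\cM=\{0\}$. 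You instead compute the annihilator directly with Lemma~\ref{lemma1}~(i), propagate the vanishing of $z\mapsto\partial_\cL((A-z)^{-1}f)|_\omega$ to all of $\rho(A)$ by the identity theorem (legitimate: $\sigma(A)$ is a discrete subset of $\dR$, so $\rho(A)$ is connected), read off $\partial_\cL(P_kf)|_\omega=0$ from the Laurent coefficients, and finish with Proposition~\ref{eigenfunc} and the eigenfunction expansion guaranteed by Proposition~\ref{dirichletProp}. Both arguments thus rest on the same two pillars, Propositions~\ref{dirichletProp} and~\ref{eigenfunc}; yours trades Krein's invariant-subspace construction for complex-analytic bookkeeping, which makes it shorter and, in fact, close in spirit to the paper's own later proof of Proposition~\ref{formulaEigenspace}, where Cauchy integrals of the resolvent and Lemma~\ref{lemma1} are used in exactly this way, while the paper's route has the merit of isolating the structural fact that $S$ is a simple symmetric operator. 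The step you flag does go through: by the first Green identity~\eqref{Green1} and a bounded right inverse of the trace map, the Neumann trace is bounded on $\{u\in H^1(\Omega):\cL u\in L^2(\Omega)\}$ with respect to the norm $\|u\|_{H^1}+\|\cL u\|_{L^2}$, and on $\dom A$ this norm is dominated by the graph norm of $A$ (a G\aa rding estimate combined with $\Phi_\cL(u,u)=(Au,u)$ for $u\in\dom A$); since both $(A-z)^{-1}f$ and $A(A-z)^{-1}f=f+z(A-z)^{-1}f$ are meromorphic with simple poles at $\lambda_k$ with residues $-P_kf$ and $-\lambda_kP_kf$, the bounded trace map may be applied to the Laurent series term by term. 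If you wish to avoid the trace-holomorphy entirely, observe that $\lambda\mapsto(\gamma(\lambda)\phi,f)$ is holomorphic on $\rho(A)$ by Lemma~\ref{lemma1}~(ii) and vanishes identically by the same identity-theorem argument; writing $P_k$ as a Cauchy integral of the resolvent and using $(A-\zeta)^{-1}\gamma(\mu)\phi=\bigl(\gamma(\zeta)\phi-\gamma(\mu)\phi\bigr)/(\zeta-\mu)$ then yields $P_k\gamma(\mu)\phi\perp f$ for all $\phi\in H^{1/2}_\omega(\partial\Omega)$, from which $\partial_\cL(P_kf)|_\omega=0$ follows via the identity $\partial_\cL u|_{\partial\Omega}=(\overline\mu-\lambda_k)\gamma(\mu)'u$ for $u\in\ker(A-\lambda_k)$, exactly as in the paper's proof of Proposition~\ref{formulaEigenspace}.
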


\begin{proof}
The proof consists of three separate steps. It makes use of two further operator realizations 
$S$ and $T$ of the differential expression $\cL$. We consider the restriction 
\begin{align*}
 S u = \cL u, \quad \dom S = \bigl\{ u \in H_0^1(\Omega) : \cL u \in L^2(\Omega), \partial_\cL u |_\omega
= 0 \bigr\},
\end{align*}
of the Dirichlet operator $A$ in $L^2(\Omega)$, which has no eigenvalues by Proposition~\ref{eigenfunc},
and we define the operator $T$ in $L^2(\Omega)$ by
\begin{align*}
 T u = \cL u, \quad \dom T = \bigl\{u \in H^1 (\Omega) : \cL u \in L^2(\Omega), \,
u\vert_{\partial\Omega}\in H_\omega^{1/2}(\partial\Omega)\bigr\}.
\end{align*}
It follows immediately that the Dirichlet operator $A$ is a restriction of $T$ and that the spaces $\cN_\lambda$ coincide with $\ker (T - \lambda)$. In the first step of this proof we show that these spaces are dense in the spaces $\ker(S^*-\lambda)$.
In the second step, which can also be found in a different form in~\cite{K49}, a 
selfadjoint restriction of $S$ in the orthogonal complement of  
$\spann\{\cN_\lambda : \lambda \in \cO \cup \cO^*\}$ is constructed. In the last step we 
then show that the spectrum of this selfadjoint operator is empty, which implies the assertion.

{\bf Step 1.} In this step we show that $\cN_\lambda=\ker(T-\lambda)$ are dense subspaces of 
$\ker(S^*-\lambda)$, $\lambda\in\cO\cup\cO^*$. For this we check first that $S=T^*$ holds. In fact, for $u \in \dom S$ and $v\in\dom T$ the second
Green identity~\eqref{GI} together with $u|_{\partial \Omega} = 0$, $\partial_\cL u |_\omega = 0$, and $\supp (v |_{\partial \Omega}) \subset \omega$ implies
\begin{displaymath}
 (T v, u) = (v, \cL u) + \bigl( v |_{\partial \Omega}, \partial_{\cL} u |_{\partial \Omega} \bigr) - 
\bigl( \partial_{\cL} v |_{\partial \Omega}, u |_{\partial \Omega} \bigr)= (v, \cL u);
\end{displaymath}
cf. \eqref{partneumann}.
Hence $u \in \dom T^*$ and $T^* u = \cL u = S u$ by the definition of the adjoint operator. 
For the converse inclusion let $u \in \dom T^*$. From $A \subseteq T$ we obtain $T^* \subseteq A^* = A$ and therefore $T^* u=\cL u$ and 
$u \in\dom A$. In particular, we have $u\in H^1(\Omega)$, $\cL u\in L^2(\Omega)$, and $u |_{\partial \Omega} = 0$. It remains to show $\partial_{\cL} u |_\omega = 0$. 
For $v\in\dom T$ we have $\supp (v\vert_{\partial\Omega})\subset\omega$ and from \eqref{partneumann} and~\eqref{GI} we obtain 
\begin{align*}
\bigl(\partial_\cL u\vert_\omega\bigr)(\overline{v |_{\partial \Omega}})= 
\bigl(\partial_\cL u |_{\partial\Omega},v |_{\partial \Omega}\bigr) = -(T^*u, v) - (u, T v) + 
\bigl( u |_{\partial \Omega},\partial_\cL v |_{\partial \Omega}\bigr) = 0.
\end{align*}
As $\overline{v|_{\partial\Omega}}$ runs through $ H_\omega^{1/2}(\partial\Omega)$ as 
$v$ runs through $\dom T$, it follows $\partial_{\cL} u |_\omega = 0$, hence $u \in \dom S$. 
We have shown $T^* = S$. This implies $\overline T=T^{**}=S^*$ and hence the spaces
$\cN_\lambda = \ker (T - \lambda)$ are dense in the spaces $\ker(S^* - \lambda)$.

{\bf Step 2.}
In this step we show that the Hilbert space 
\begin{equation}\label{cm}
\cM := \bigl(\spann \{ \cN_\lambda : \lambda \in \cO \cup \cO^* \} \bigr)^\perp 
\end{equation}
is invariant for $S$ and that $S_\cM := S \upharpoonright (\cM \cap \dom S)$ is a selfadjoint operator in $\cM$;
cf. \cite{K49}.
Observe first that by step 1 the symmetric operator $S=T^*$ is closed and hence $\ran(S-\lambda)$ is closed for all
$\lambda\in\dC\setminus\dR$; cf. Appendix. According to step 1 we also have  
$$\cM = \bigl(\spann \{ \ker(S^* - \lambda) : \lambda \in \cO \cup \cO^* \} \bigr)^\perp = \bigcap_{\lambda \in \cO \cup \cO^*} \ran(S - \lambda).$$  
 Let $u \in \cM \cap \dom S$. Then $u \in \ran (S - \lambda)$ for all $\lambda \in \cO \cup \cO^*$, i.e., for each $\lambda \in \cO \cup \cO^*$ there exists  $u_\lambda \in \dom S$ such that $(S - \lambda) u_\lambda = u$ holds.
This implies
\begin{align*}
 S u = S (S - \lambda) u_\lambda = (S - \lambda) S u_\lambda \in \ran (S - \lambda)
\end{align*}
for each $\lambda \in \cO \cup \cO^*$ and hence $S u \in \cM$. Therefore,
$S_\cM=S \upharpoonright (\cM \cap \dom S)$ is an operator in $\cM$ and since $S$ is symmetric
we conclude that the restriction $S_\cM$ is a symmetric operator in $\cM$. 
For the selfadjointness of $S_\cM$ in $\cM$ it is sufficient to show
\begin{equation}\label{ranm}
\ran(S_\cM-\nu)=\cM=\ran(S_\cM-\overline\nu)\quad\text{for some}\quad\nu\in\cO.
\end{equation}
For this let $u\in\cM$. We claim that $v:=(S-\nu)^{-1}u\in\dom S$ belongs to $\cM$. In fact, for 
each $\lambda\in\cO\cup\cO^*$, $\lambda\not=\nu$, the function
\begin{align*}
 v_\lambda := \frac{1}{\lambda-\nu} \left( (S - \lambda)^{-1} - (S-\nu)^{-1} \right) u 
\end{align*}
satisfies $(S-\lambda)v_\lambda =v$ and hence $v\in\ran(S-\lambda)$ for all $\lambda\in\cO\cup\cO^*$,
$\lambda\not=\nu$. In order to check that also $v\in\ran(S-\nu)$ holds, we choose a sequence
$(\lambda_n)_n\subseteq\cO$, $\lambda_n\not=\nu$, which converges to $\nu$. 
As above it follows that $(S-\lambda_n)^{-1}u\in\ran(S-\lambda)$ for all $\lambda\in\cO\cup\cO^*$, $\lambda\not=\lambda_n$, and, in particular, $(S-\lambda_n)^{-1}u\in\ran(S-\nu)$.
Since $S$
is a closed symmetric operator the estimates $\Vert (S-\nu)^{-1}\Vert\leq\vert\Imag\,\nu\vert^{-1}$ and $\Vert (S-\lambda_n)^{-1}\Vert\leq\vert\Imag\,\lambda_n\vert^{-1}$ hold, and hence
\begin{equation*}
v-(S-\lambda_n)^{-1}u=(S-\nu)^{-1}u-(S-\lambda_n)^{-1}u=(\nu-\lambda_n)(S-\nu)^{-1}(S-\lambda_n)^{-1}u
\end{equation*}
implies
\begin{equation}\label{glim}
 v=\lim_{n\rightarrow\infty}(S-\lambda_n)^{-1}u.
\end{equation}
Since $(S-\lambda_n)^{-1}u\in\ran(S-\nu)$ and $\ran(S-\nu)$ is closed
we conclude $v\in\ran(S-\nu)$ from \eqref{glim}. We have shown $v\in\cM$. Moreover,
$(S_\cM-\nu)v=(S-\nu)v=u$ and, hence, the first equality in \eqref{ranm} holds. The second equality
in \eqref{ranm} follows in the same way. Therefore $S_\cM=S\upharpoonright(\cM\cap\dom S)$ is a selfadjoint operator in $\cM$.

{\bf Step 3.} It follows from step 2 that the operator $S$ can be written as the direct orthogonal sum
$S_\cM\oplus S_{\cM^\bot}$ with respect to the decomposition $L^2(\Omega)=\cM\oplus\cM^\bot$, where $S_\cM$ is a selfadjoint operator in $\cM$ and 
$S_{\cM^\bot}=S\upharpoonright(\cM^\bot\cap\dom S)$ is a closed symmetric operator in $\cM^{\bot}$.
Moreover, the selfadjoint Dirichlet operator $A$ admits the decomposition $A=S_\cM\oplus A^\prime_{\cM^\bot}$, where $A^\prime_{\cM^\bot}$
is a selfadjoint extension of $S_{\cM^\bot}$.
Since the spectrum of $A$ consists only of eigenvalues 
(see Proposition~\ref{dirichletProp}) the same holds for the spectrum of the selfadjoint part $S_\cM$. 
Clearly, each eigenfunction of $S_\cM$ is also an eigenfunction of $S=S_\cM\oplus S_{\cM^\bot}$, but by Proposition~\ref{eigenfunc} this operator has no eigenfunctions. Therefore, the spectrum of the selfadjoint
operator $S_\cM$ in $\cM$ is empty which implies $\cM=\{0\}$; cf. Appendix. Hence
\begin{equation*}
 L^2(\Omega)=\cM^\bot=\overline{\spann} \bigl\{ \cN_\lambda : \lambda \in \cO \cup \cO^* \bigr\}
\end{equation*}
by \eqref{cm}, where $\overline{\spann}$ denotes the closed linear span. 
This completes the proof of Lemma~\ref{simplicity}.
\end{proof}

With these preparations we are ready to prove the uniqueness result Theorem~\ref{uniqueness}.

\begin{proof}[{\bf Proof of Theorem~\ref{uniqueness}}]
Let $\cL_1$, $\cL_2$ be elliptic differential expressions as in the theorem and let $A_1$, $A_2$ and $M_1$, $M_2$
be the corresponding selfadjoint Dirichlet operators and Dirichlet-to-Neumann maps, respectively. 
The associated Poisson operators from Definition~\ref{poissondef} will be denoted by $\gamma_1(\lambda)$
and $\gamma_2(\lambda)$, respectively, $\lambda\in\rho(A_1)\cap\rho(A_2)$.
Since $\cD\subseteq\rho(A_1)\cap\rho(A_2)$ in Theorem~\ref{uniqueness} is a set with an accumulation point in the intersection of the domains 
of holomorphy of the functions $M_1$ and $M_2$ (see Lemma~\ref{lemma1}~(iv)), 
it follows that $M_1$ and $M_2$ coincide on $\omega$, i.e., 
\begin{equation}\label{coincide}
 (M_1(\lambda) \phi, \psi)= (M_2(\lambda) \phi, \psi), \qquad \phi, \psi \in H_\omega^{1/2}(\partial\Omega), 
\end{equation}
holds for all $\lambda\in\rho(A_1)\cap\rho(A_2)$, and, in particular, for all $\lambda \in \C \setminus \R$. Here $ H_\omega^{1/2}(\partial\Omega)$ 
is the linear subspace of $H^{1/2}(\partial\Omega)$ which consists of functions with support in $\omega$;
cf. \eqref{homega}.

Let in the following $\lambda, \mu \in \C \setminus \R$, $\lambda \neq \overline \mu$. Lemma~\ref{lemma1}~(iii) 
and \eqref{coincide} yield
\begin{equation}\label{isometry}
\begin{split}
\bigl(\gamma_1(\lambda) \phi,\gamma_1(\mu)\psi\bigr) &= \frac{(M_1(\lambda) \phi,\psi) - (\phi,M_1(\mu)\psi)}{\overline \mu-\lambda} \\
&= \frac{(M_2(\lambda) \phi,\psi) - (\phi,M_2(\mu)\psi)}{\overline \mu-\lambda}
=\bigl(\gamma_2(\lambda) \phi,\gamma_2(\mu)\psi\bigr)
\end{split}
\end{equation}
for all  $\phi, \psi \in  H_\omega^{1/2}(\partial\Omega)$.
Next we define a linear mapping $V$ in $L^2(\Omega)$ on the domain  
$\dom V=\spann \{\gamma_1(\lambda)\phi : \phi\in H_\omega^{1/2}(\partial\Omega),\,\lambda \in \C \setminus \R \}$ by
\begin{align*}
 V : \sum_{j = 1}^\ell \gamma_1(\lambda_j) \phi_j \mapsto \sum_{j = 1}^\ell \gamma_2(\lambda_j) \phi_j,
\quad \lambda_j \in \C \setminus \R,\,\,\phi_j \in H_\omega^{1/2}(\partial\Omega),\,\,1 \leq j \leq \ell.
\end{align*}
Formula~(\ref{isometry}) yields that $V$ is a well-defined, isometric operator in $L^2(\Omega)$ with
$\ran V=\spann \{\gamma_2(\lambda)\phi :\phi\in  H_\omega^{1/2}(\partial\Omega),\, \lambda \in \C \setminus \R \}$. 
By Lemma~\ref{simplicity} the domain and range of $V$ are both dense subspaces of $L^2(\Omega)$, and hence the closure $U$ of $V$ in $L^2(\Omega)$ is a unitary operator in $L^2(\Omega)$. Obviously 
$U \gamma_1(\lambda) \phi = \gamma_2(\lambda) \phi$ holds for all $\lambda \in \C \setminus \R$ and $\phi \in  H_\omega^{1/2}(\partial\Omega)$. With the help of Lemma~\ref{lemma1}~(ii) one computes for $\lambda \neq \mu$ 
\begin{align*}
 U (A_1 - \lambda)^{-1} \gamma_1(\mu)\phi = \frac{U\gamma_1(\lambda) \phi- U\gamma_1(\mu)\phi}{\lambda - \mu} = (A_2 - \lambda)^{-1} U \gamma_1(\mu)\phi
\end{align*}
for $\phi \in  H_\omega^{1/2}(\partial\Omega)$. From Lemma~\ref{simplicity} we then conclude
\begin{align*}
 U (A_1 - \lambda)^{-1} = (A_2 - \lambda)^{-1} U
\end{align*}
and therefore $UA_1u=A_2Uu$ for all $u\in\dom A_1$, that is, $A_1$ and $A_2$ are unitarily equivalent.
\end{proof}

The next proposition is the key ingredient in the proof of the reconstruction result 
Theorem~\ref{reconstruction}. Here the residual $\Res_\lambda M:H^{1/2}(\partial\Omega)\rightarrow
H^{-1/2}(\partial\Omega)$ of the Dirichlet-to-Neumann
map at a pole $\lambda$ and a local version of it play the main role. Define the operator
\begin{equation}\label{resi1}
\Res_\lambda^\omega M: H^{1/2}_\omega(\partial\Omega)\rightarrow 
\bigl(H^{1/2}_\omega(\partial\Omega)\bigr)^\prime,\qquad\phi\mapsto \Res_\lambda^\omega M\phi,
\end{equation}
where $\Res_\lambda^\omega M\phi$ is the restriction of $\Res_\lambda M\phi\in H^{-1/2}(\partial\Omega)$ onto $H^{1/2}_\omega(\partial\Omega)$, i.e.,
\begin{equation}\label{resi2}
\bigl(\Res_\lambda^\omega M\phi\bigr)(\psi):=\bigl(\Res_\lambda M\phi\bigr)(\psi)=\bigl(\Res_\lambda M\phi,\overline\psi),
\quad\psi\in H^{1/2}_\omega(\partial\Omega);
\end{equation}
cf. the definition above Theorem~\ref{reconstruction} in the introduction.
Note that in the special case $\omega=\partial\Omega$
the operators $\Res_\mu M$ and $\Res_\mu^\omega M$ coincide.
 It turns out that the Neumann trace operator on $\omega$ maps
each eigenspace of the Dirichlet operator bijectively onto the range of the corresponding 
residual of the Dirichlet-to-Neumann map on $\omega$.

\begin{proposition}\label{formulaEigenspace}
For each $\lambda$ the mapping
\begin{align*}
 \tau_\lambda : \ker (A - \lambda) & \to \ran \Res_\lambda^\omega M,\quad u  \mapsto \partial_\cL u |_\omega,
\end{align*}
is an isomorphism and, in particular, 
\begin{align*}
 \ran \Res_\lambda^\omega M = \bigl\{ \psi  : \text{there~exists}\, u \in \ker(A - \lambda)\, \text{such that}\, \,\partial_\cL u|_\omega = \psi \bigr\}.
\end{align*}
\end{proposition}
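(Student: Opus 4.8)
The plan is to reduce to the case that $\lambda$ is an eigenvalue of $A$: if $\lambda\in\rho(A)$ then $\ker(A-\lambda)=\{0\}$ and $\Res_\lambda^\omega M=0$, so there is nothing to prove. Assume henceforth that $\lambda$ is an eigenvalue, let $P$ denote the orthogonal projection in $L^2(\Omega)$ onto the finite-dimensional space $\ker(A-\lambda)$ (Proposition~\ref{dirichletProp}), and fix a real point $\lambda_0\in\rho(A)$, which exists since $A$ is bounded from below. The first step is to identify the residual of $M$ explicitly. From Lemma~\ref{lemma1}~(ii) I would write
\[
\gamma(\lambda')\phi=\gamma(\lambda_0)\phi+(\lambda'-\lambda_0)(A-\lambda')^{-1}\gamma(\lambda_0)\phi,
\]
where the second summand lies in $\dom A$ and, since $(A-\lambda')^{-1}$ has residue $-P$ at $\lambda'=\lambda$, depends meromorphically on $\lambda'$ in the graph norm with residue $-(\lambda-\lambda_0)P\gamma(\lambda_0)\phi$. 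As the conormal derivative is bounded with respect to the graph norm, applying it to the relation $M(\lambda')\phi=\partial_\cL(\gamma(\lambda')\phi)|_{\partial\Omega}$ shows that $\lambda'\mapsto M(\lambda')$ has a simple pole at $\lambda$ with
\[
\Res_\lambda M\,\phi=\partial_\cL(R\phi)|_{\partial\Omega},\qquad R\phi:=-(\lambda-\lambda_0)P\gamma(\lambda_0)\phi\in\ker(A-\lambda).
\]
Consequently $\Res_\lambda^\omega M\,\phi=\partial_\cL(R\phi)|_\omega=\tau_\lambda(R\phi)$ for $\phi\in H^{1/2}_\omega(\partial\Omega)$, so that $\ran\Res_\lambda^\omega M=\tau_\lambda\bigl(R\,H^{1/2}_\omega(\partial\Omega)\bigr)=\tau_\lambda(P\cN_{\lambda_0})$, recalling $\cN_{\lambda_0}=\gamma(\lambda_0)H^{1/2}_\omega(\partial\Omega)$.

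Two facts now finish the proof, both resting on the unique continuation statement Proposition~\ref{eigenfunc}. First, $\tau_\lambda$ is injective, because an eigenfunction $u$ with $\tau_\lambda u=\partial_\cL u|_\omega=0$ vanishes by Proposition~\ref{eigenfunc}. Second — and this I expect to be the main obstacle — I must establish the surjectivity $P\cN_{\lambda_0}=\ker(A-\lambda)$; granting this, $\ran\Res_\lambda^\omega M=\tau_\lambda(\ker(A-\lambda))=\ran\tau_\lambda$, and the injective linear map $\tau_\lambda\colon\ker(A-\lambda)\to\ran\Res_\lambda^\omega M$ is then a bijection between finite-dimensional spaces, hence an isomorphism. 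Since $\ker(A-\lambda)$ is finite-dimensional, $P\cN_{\lambda_0}$ is a closed subspace of it, so it suffices to rule out a nonzero $w\in\ker(A-\lambda)$ orthogonal to $P\cN_{\lambda_0}$.

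For such a $w$, using $Pw=w$ and the selfadjointness of $P$ gives $(w,\gamma(\lambda_0)\phi)=(w,P\gamma(\lambda_0)\phi)=0$ for all $\phi\in H^{1/2}_\omega(\partial\Omega)$. By the adjoint formula Lemma~\ref{lemma1}~(i) together with $(A-\lambda_0)^{-1}w=(\lambda-\lambda_0)^{-1}w$, the quantity $(\gamma(\lambda_0)\phi,w)$ equals a nonzero multiple of the duality pairing of $\phi$ with $\partial_\cL w|_{\partial\Omega}$; its vanishing for every $\phi$ supported in $\omega$ says, by the definition~\eqref{partneumann} of the restricted Neumann trace, exactly that $\partial_\cL w|_\omega=0$. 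Proposition~\ref{eigenfunc} then forces $w=0$, which yields $P\cN_{\lambda_0}=\ker(A-\lambda)$ and completes the argument. The delicate point is precisely this surjectivity: it asserts that projecting the solution spaces $\cN_{\lambda_0}$ onto a Dirichlet eigenspace already exhausts that eigenspace, and it is here that the unique continuation property of $\cL$ encoded in Proposition~\ref{eigenfunc} is indispensable, entering symmetrically to the injectivity of $\tau_\lambda$.
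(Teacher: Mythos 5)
Your proof is correct, but it takes a genuinely different route from the paper's in both main steps. To identify the residual, the paper writes the spectral projection $E_\lambda$ as a Cauchy integral and computes the weak pairings $\bigl(E_\lambda\gamma(\eta)\phi,\gamma(\mu)\psi\bigr)$ from Lemma~\ref{lemma1}~(ii)--(iii), arriving at $\Res_\lambda^\omega M\phi=(\eta-\lambda)(\overline\mu-\lambda)\gamma(\mu)'E_\lambda\gamma(\eta)\phi$ without ever needing mapping properties of the conormal derivative; your route through the Laurent expansion of $(A-\lambda')^{-1}\gamma(\lambda_0)\phi$ is equivalent (with $\eta=\mu=\lambda_0$ both formulas give $(\lambda-\lambda_0)^2\gamma(\lambda_0)'P\gamma(\lambda_0)\phi$), but it leans on one fact you assert rather than prove, namely that $u\mapsto\partial_\cL u|_{\partial\Omega}$ is bounded from $\dom A$ with the graph norm into $H^{-1/2}(\partial\Omega)$. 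This is standard --- the first Green identity \eqref{Green1} bounds $\partial_\cL u|_{\partial\Omega}$ in $H^{-1/2}(\partial\Omega)$ by $\|u\|_{H^1}+\|\cL u\|_{L^2}$, and a G\r{a}rding-type estimate shows the graph norm dominates the $H^1$-norm on $\dom A$ --- but it deserves a line, or can be bypassed entirely by testing against $\gamma(\lambda_0)\psi$ and taking residues of the scalar functions in Lemma~\ref{lemma1}~(iv). The more substantial divergence is surjectivity: the paper approximates $u\in\ker(A-\lambda)$ by combinations $\sum_j\gamma(\eta_j)\phi_j$ using the density result Lemma~\ref{simplicity}, applies the bounded operator $\gamma(\mu)'E_\lambda$, and closes via finite-dimensionality of $\ran\Res_\lambda^\omega M$; you instead prove $P\cN_{\lambda_0}=\ker(A-\lambda)$ outright by the orthogonality computation $(w,\gamma(\lambda_0)\phi)=0$ for all $\phi\in H^{1/2}_\omega(\partial\Omega)$, which via Lemma~\ref{lemma1}~(i) and $(A-\lambda_0)^{-1}w=(\lambda-\lambda_0)^{-1}w$ yields $\partial_\cL w|_\omega=0$, and then Proposition~\ref{eigenfunc} forces $w=0$. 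Your version is more direct and makes Proposition~\ref{formulaEigenspace} independent of Lemma~\ref{simplicity}, with unique continuation entering symmetrically in injectivity and surjectivity; the paper's version stays entirely at the level of the $L^2$ and boundary pairings already established in Lemma~\ref{lemma1} and reuses Lemma~\ref{simplicity}, which is needed for Theorem~\ref{uniqueness} in any case.
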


As an immediate consequence of this proposition we obtain the following statement 
which complements Lemma~\ref{lemma1}~(iv).

\begin{corollary}\label{charEigenvalues}
A point $\lambda_0$ is an eigenvalue of $A$ if and only if $\lambda_0$ is a pole of the Dirichlet-to-Neumann map
on $\omega$. In this case the dimension of the eigenspace $\ker (A - \lambda_0)$ coincides with the dimension of the range of the operator $\Res_{\lambda_0}^\omega M$.
\end{corollary}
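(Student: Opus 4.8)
The plan is to treat the only nontrivial case, namely when $\lambda = \lambda_k$ is an eigenvalue of $A$; if $\lambda \in \rho(A)$, then $\ker(A-\lambda) = \{0\}$, the map $M$ is holomorphic at $\lambda$ by Lemma~\ref{lemma1}~(iv), hence $\Res_\lambda^\omega M = 0$, and the claim is vacuous. So I fix an eigenvalue $\lambda_k$, write $E_k := \ker(A - \lambda_k)$ (finite-dimensional by Proposition~\ref{dirichletProp}), and let $P_k$ be the orthogonal projection in $L^2(\Omega)$ onto $E_k$. The goal splits into two parts: (a) $\tau_{\lambda_k}$ is injective, and (b) its range coincides with $\ran\Res_{\lambda_k}^\omega M$.

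Injectivity (a) is immediate: if $u \in E_k$ satisfies $\tau_{\lambda_k}u = \partial_\cL u|_\omega = 0$, then $u$ is an eigenfunction with vanishing Neumann trace on $\omega$, so $u = 0$ by Proposition~\ref{eigenfunc}. Hence $\tau_{\lambda_k}\colon E_k \to \ran\tau_{\lambda_k}$ is an isomorphism, and it remains to identify its range with $\ran\Res_{\lambda_k}^\omega M$. For this I first compute the residue of $M$ explicitly. Fixing some $\lambda_0 \in \rho(A)$ and using Lemma~\ref{lemma1}~(ii) in the form $\gamma(\lambda) = (I + (\lambda - \lambda_0)(A-\lambda)^{-1})\gamma(\lambda_0)$ together with the simple pole $\Res_{\lambda_k}(A-\lambda)^{-1} = -P_k$ of the resolvent, one finds that $\lambda \mapsto \gamma(\lambda)\phi$ has a simple pole at $\lambda_k$ with residue $-(\lambda_k - \lambda_0)P_k\gamma(\lambda_0)\phi \in E_k$. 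Since $M(\lambda)\phi = \partial_\cL(\gamma(\lambda)\phi)|_{\partial\Omega}$ and this residue lies in $E_k \subset \dom A$, so that its Neumann trace is defined, interchanging the Neumann trace with the residue yields
$$\Res_{\lambda_k} M\phi = -(\lambda_k - \lambda_0)\, \partial_\cL\bigl(P_k\gamma(\lambda_0)\phi\bigr)\big|_{\partial\Omega}, \qquad \phi \in H^{1/2}(\partial\Omega),$$
i.e.\ $\Res_{\lambda_k}M\phi$ is the Neumann trace of the eigenfunction $-(\lambda_k-\lambda_0)P_k\gamma(\lambda_0)\phi$. Restricting to $\omega$ gives $\Res_{\lambda_k}^\omega M\phi = \tau_{\lambda_k}\bigl(-(\lambda_k-\lambda_0)P_k\gamma(\lambda_0)\phi\bigr)$, so in particular $\ran\Res_{\lambda_k}^\omega M = \tau_{\lambda_k}\bigl(P_k\gamma(\lambda_0)(H^{1/2}_\omega(\partial\Omega))\bigr)$.

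By injectivity of $\tau_{\lambda_k}$, part (b) now reduces to the surjectivity statement $P_k\gamma(\lambda_0)\bigl(H^{1/2}_\omega(\partial\Omega)\bigr) = E_k$. As $E_k$ is finite-dimensional, I prove this by duality: suppose $e \in E_k$ is orthogonal in $L^2(\Omega)$ to $P_k\gamma(\lambda_0)\phi$ for every $\phi \in H^{1/2}_\omega(\partial\Omega)$. Since $P_k e = e$, this means $(\gamma(\lambda_0)\phi, e) = 0$ for all such $\phi$. Applying the second Green identity~\eqref{GI} to $u = e$ (with $\cL e = \lambda_k e$, $e|_{\partial\Omega} = 0$) and $v = \gamma(\lambda_0)\phi$ (with $\cL v = \lambda_0 v$, $v|_{\partial\Omega} = \phi$) turns this into $(\partial_\cL e|_{\partial\Omega}, \phi) = 0$ for all $\phi \in H^{1/2}_\omega(\partial\Omega)$, that is, $\partial_\cL e|_\omega = 0$ by~\eqref{partneumann}. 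Proposition~\ref{eigenfunc} then forces $e = 0$, so the orthogonal complement of $P_k\gamma(\lambda_0)(H^{1/2}_\omega(\partial\Omega))$ inside $E_k$ is trivial and the range is all of $E_k$. Combining with the previous paragraph gives $\ran\Res_{\lambda_k}^\omega M = \tau_{\lambda_k}(E_k) = \ran\tau_{\lambda_k}$, which together with (a) shows that $\tau_{\lambda_k}$ is an isomorphism onto $\ran\Res_{\lambda_k}^\omega M$. The final displayed description of $\ran\Res_\lambda^\omega M$ is then merely a restatement of $\ran\tau_\lambda = \ran\Res_\lambda^\omega M$.

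The step I expect to be the main obstacle is the residue computation, specifically the justification that the Neumann trace operator may be interchanged with the residue. This requires knowing that $\lambda \mapsto \gamma(\lambda)\phi$ is, near $\lambda_k$, a meromorphic $L^2$-valued function whose principal part is the fixed eigenfunction $-(\lambda_k-\lambda_0)P_k\gamma(\lambda_0)\phi$ and whose holomorphic remainder stays in the graph-norm domain $\{u : \cL u \in L^2\}$ on which the Neumann trace is bounded; this is where the facts on Banach space-valued analytic functions from the appendix enter. Everything else reduces cleanly, via the Green identities and finite-dimensional duality, to the unique continuation result already established in Proposition~\ref{eigenfunc}.
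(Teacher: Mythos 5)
Your proposal is correct, and although it culminates in the same pivotal identity as the paper --- that $\Res_{\lambda_k}^\omega M\phi$ is a nonzero multiple of the restricted Neumann trace of the projected Poisson solution, i.e.\ your formula is exactly \eqref{abc} with $\eta=\lambda_0$ after simplifying $\gamma(\mu)'$ on the eigenspace --- it reaches it, and finishes from it, by a genuinely different route. For the residue formula, the paper works weakly: it pairs $E_{\lambda}\gamma(\eta)\phi$ against $\gamma(\mu)\psi$, evaluates a scalar contour integral via Cauchy's formula and Lemma~\ref{lemma1}~(ii)--(iii), and never interchanges the Neumann trace with a vector-valued residue --- precisely the step you single out as the main obstacle; your direct Laurent-expansion argument is fine, but it needs exactly the graph-norm justification you sketch (the regular part of the resolvent is holomorphic into $\dom A$ in the graph norm, $\dom A$ embeds continuously into $H^1(\Omega)$, and $u\mapsto\partial_\cL u|_{\partial\Omega}$ is bounded on $\{u\in H^1(\Omega):\cL u\in L^2(\Omega)\}$), whereas the paper's weak formulation sidesteps it entirely. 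The more substantial divergence is the surjectivity: the paper approximates $u\in\ker(A-\lambda_k)$ in $L^2(\Omega)$ by spans of $\gamma(\eta_j)\phi_j$ using the density result Lemma~\ref{simplicity} (whose proof invokes Krein's construction and unique continuation), then uses boundedness of $\gamma(\mu)'E_\lambda$ and closedness of the finite-dimensional range; you instead prove $P_k\gamma(\lambda_0)\bigl(H^{1/2}_\omega(\partial\Omega)\bigr)=\ker(A-\lambda_k)$ by finite-dimensional orthogonality, reducing via the second Green identity \eqref{GI} to $\partial_\cL e|_\omega=0$ and invoking Proposition~\ref{eigenfunc} directly. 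This bypasses Lemma~\ref{simplicity} altogether, so for this corollary your argument is lighter and more self-contained (your direct injectivity argument, without the resolvent detour of \eqref{Neumann_trace_0}, is a further small simplification); what the paper's route buys is that it stays within the calculus of Lemma~\ref{lemma1} with no vector-valued interchange to justify, and it reuses the density lemma it needs anyway for Theorem~\ref{uniqueness}. One sentence worth adding to your write-up: since by Lemma~\ref{lemma1}~(iv) eigenvalues are at most simple poles or removable singularities, the nontriviality $\dim\ran\Res_{\lambda_k}^\omega M=\dim\ker(A-\lambda_k)\geq 1$ is what promotes $\lambda_k$ to a genuine pole of $\lambda\mapsto(M(\lambda)\phi,\psi)$ for suitable $\phi,\psi$ supported in $\omega$, completing the ``if and only if.''
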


\begin{proof}[Proof of Proposition~\ref{formulaEigenspace}]
Note first that if $\lambda$ is not an eigenvalue of $A$, then the function $M$ 
is holomorphic at
$\lambda$ by Lemma~\ref{lemma1}~(iv) and hence the residual $\Res_\lambda M$ is zero.
Therefore the assertion holds in this case.

In the following let $\lambda$ ($\in\dR$) be an eigenvalue of $A$ and let 
$\tau_\lambda u=\partial_\cL u\vert_\omega$ be the Neumann trace operator on $\omega$ on $\ker(A-\lambda)$ defined in \eqref{partneumann}.
For $u \in \ker (A - \lambda)$ and some fixed $\mu\in\dC\setminus\dR$ we obtain for all $\psi\in H^{1/2}_\omega(\partial\Omega)$ by~Lemma~\ref{lemma1}~(i)
\begin{equation}\label{taurep}
\begin{split}
(\tau_\lambda u)(\psi) & = (\partial_\cL u\vert_{\partial\Omega},\overline\psi\,)=
\bigl(\partial_\cL \left( (A - \overline \mu)^{-1} A u - (A - \overline \mu)^{-1} \overline \mu u \right) |_{\partial\Omega},\overline\psi\,\bigr)\\ 
 & = \bigl((\lambda-\overline \mu) \partial_\cL \left( ( A - \overline\mu )^{-1} u \right)|_{\partial\Omega},\overline\psi\,\bigr) 
= \bigl((\overline \mu - \lambda) \gamma(\mu)' u,\overline\psi\,\bigr).
\end{split}
\end{equation}

Let us show that $\tau_\lambda$ is injective. Assume there exists $u \in \ker(A - \lambda)$, $u \neq 0$, such that
$\tau_\lambda u=0$, that is,
\begin{align} \label{Neumann_trace_0}
\bigl(\partial_\cL \left((A - \overline \mu)^{-1} u\right)|_\omega\bigr)(\psi)=0
\end{align}
for all $\psi\in H^{1/2}_\omega(\partial\Omega)$ by \eqref{taurep}.
Since $u \neq 0$, also $(A - \overline \mu)^{-1} u \neq 0$ holds, but 
$$(A - \lambda) (A - \overline \mu)^{-1} u = (A - \overline \mu)^{-1} (A - \lambda) u = 0$$ 
implies that $(A - \overline \mu)^{-1} u$ is an eigenfunction of $A$ with vanishing Neumann trace on $\omega$ by~(\ref{Neumann_trace_0}). This contradicts Proposition~\ref{eigenfunc}. Hence $\tau_\lambda$ is injective.

Next we show that $\tau_\lambda$ maps onto $\ran \Res_\lambda^\omega M$, which by \eqref{taurep}  is equivalent to 
\begin{align}\label{eigenspace_residual_equality}
 \ran \Res_{\lambda}^\omega M
=
 \ran \bigl(\gamma(\mu)' \upharpoonright \ker(A - \lambda) \bigr).
\end{align}
The inclusion $\subseteq$ in \eqref{eigenspace_residual_equality} will be shown first.
For this denote by $E_{\lambda}$ the spectral projection onto the eigenspace $\ker (A - \lambda)$ of $A$. Let 
$\eta \in \C \setminus \R$, $\eta \neq \overline \mu$, and $\cO$ be an open ball centered in $\lambda$ such that $\cO$ does not contain 
any further eigenvalues of $A$ and $\mu, \eta \notin \overline{\cO}$. Denote by $\Gamma_{\lambda}$ the boundary of $\cO$ and write the 
spectral projection $E_\lambda$
as a Cauchy integral; cf. Appendix. For $\phi,\psi \in H^{1/2}(\partial\Omega)$ we obtain with the help of Lemma~\ref{lemma1} (ii) and (iii)
\begin{align*}
 \bigl(E_{\lambda}\gamma(\eta) \phi,&\gamma(\mu)\psi\bigr)  = - \frac{1}{2 \pi i} \int_{\Gamma_{\lambda}} 
\bigl( (A - \zeta)^{-1} \gamma(\eta)\phi,\gamma(\mu)\psi)\bigr) \d \zeta \\
 &= - \frac{1}{2 \pi i} \int_{\Gamma_{\lambda}} \left( \frac{1}{\zeta-\eta}
(\gamma(\zeta) \phi,\gamma(\mu)\psi) -  \frac{1}{\zeta-\eta}(\gamma(\eta) \phi,\gamma(\mu)\psi) \right) \d \zeta\\
 & = \frac{1}{2 \pi i} \int_{\Gamma_{\lambda}} \left( \frac{(M(\zeta) \phi,\psi)}{(\eta-\zeta) (\overline \mu-\zeta)} + \frac{(\phi,M(\mu) \psi)}{(\overline \mu-\zeta) (\overline \mu-\eta)} + \frac{(M(\eta) \phi,\psi)}{(\zeta - \eta) (\overline \mu-\eta)} \right) \d \zeta;
\end{align*}
cf. \cite[$\S {\rm I}.1$]{DLS93}. Note that the second and third fraction in the integral are holomorphic in $\cO$ as functions of $\zeta$ and hence can be neglected. In the remaining fraction, we develop the function $\zeta \mapsto M(\zeta) $ into a Laurent series at $\lambda$. Since this function has a pole of, at most, order one in $\lambda$, see Lemma~\ref{lemma1}~(iv) and the Appendix, we obtain
\begin{equation*}
 \bigl( E_{\lambda} \gamma(\eta) \phi,\gamma(\mu)\psi\bigr)  = \frac{1}{2 \pi i} \int_{\Gamma_{\lambda}} \frac{(\Res_{\lambda} M \phi,\psi)}{(\zeta - \lambda)(\eta - \zeta) (\overline \mu-\zeta) } \d \zeta = \frac{(\Res_{\lambda} M \phi,\psi)}{(\eta - \lambda)(\overline \mu-\lambda)},
\end{equation*}
where Cauchy's integral formula was used in the second equality.
Therefore we have
\begin{equation*}
(\Res_{\lambda} M \phi,\psi) = (\eta - \lambda)(\overline\mu-\lambda)\bigl( E_{\lambda} \gamma(\eta) \phi,\gamma(\mu)\psi\bigr) 
\end{equation*}
and hence, in particular,
\begin{equation}\label{abc}
\Res_\lambda^\omega M\phi=(\eta - \lambda)(\overline\mu-\lambda)\gamma(\mu)^\prime E_\lambda\gamma(\eta)\phi,
\qquad \phi\in H_\omega^{1/2}(\partial\Omega);
\end{equation}
cf. \eqref{resi1} and \eqref{resi2}.
This shows the inclusion $\subseteq$ in \eqref{eigenspace_residual_equality}.

In order to show the inclusion $\supseteq$ in \eqref{eigenspace_residual_equality} let 
$u\in\ker(A-\lambda)$ and $\eps >0$. Since $\gamma(\mu)' E_{\lambda}$ is bounded by Lemma~\ref{lemma1}~(i), there exists $\delta > 0$ such that $\| u - v \| < \delta$ implies $\| \gamma(\mu)' E_{\lambda} u - \gamma(\mu)' E_{\lambda} v \| < \eps$. 
Note that $E_\lambda u=u$ as $u\in\ker(A-\lambda)$. By Lemma~\ref{simplicity} we find $\ell \in \N$, $\eta_j \in \C \setminus \R$, and $\phi_j \in  H_\omega^{1/2}(\partial\Omega)$, $1 \leq j \leq \ell$, such that
\begin{align*}
 \bigg\| u - \sum_{j = 1}^\ell \gamma(\eta_j) \phi_j \bigg\| < \delta.
\end{align*}
Then we have
\begin{equation}\label{hurra}
\bigg\|\gamma(\mu)^\prime E_\lambda u - \gamma(\mu)^\prime E_\lambda  \sum_{j = 1}^\ell \gamma(\eta_j) \phi_j\bigg\|<\eps
\end{equation}
and from \eqref{abc} and \eqref{hurra} we conclude 
\begin{equation*}
\bigg\|\gamma(\mu)^\prime u - \sum_{j=1}^\ell \frac{\Res_\lambda^\omega M\phi_j}{(\eta_j-\lambda)(\overline\mu-\lambda)}\bigg\|<\eps,
\end{equation*}
that is, 
$\gamma(\mu)' u \in \overline{\ran \Res_{\lambda}^\omega M}$. 
The fact that $\ker(A - \lambda)$ is finite-dimensional (see~Proposition~\ref{dirichletProp}) 
shows~\eqref{eigenspace_residual_equality}. Thus $\tau_\lambda$ is bijective and maps the finite-dimensional
space $\ker(A - \lambda)$ onto $\ran \Res_{\lambda}^\omega M$, i.e., $\tau_\lambda$ is an isomorphism.
\end{proof}

Theorem~\ref{reconstruction} is now essentially a consequence of Proposition~\ref{formulaEigenspace}.

\begin{proof}[{\bf Proof of Theorem~\ref{reconstruction}}]
For the statement on the poles of $M$ see Corollary~\ref{charEigenvalues}. In order to prove the representation of $A$ let us denote by $(\lambda_k)_k$ the sequence of eigenvalues of $A$ and by $\tau_k$ the restriction of the Neumann trace operator onto the eigenspace $\ker(A - \lambda_k)$ from Proposition~\ref{formulaEigenspace},
\begin{align*}
 \tau_k = \tau_{\lambda_k} : \ker (A - \lambda_k) \to \ran \Res_{\lambda_k}^\omega M, \quad u \mapsto \partial_\cL u |_\omega.
\end{align*}
Since by Proposition~\ref{formulaEigenspace} $\tau_k$ is an isomorphism, the formula
\begin{align*}
 \ker(A - \lambda_k) = \ran \left(\tau_k^{-1} \circ \Res_{\lambda_k}^\omega M \right)
\end{align*}
for the eigenspace corresponding to the eigenvalue $\lambda_k$ follows immediately. In particular, 
we can choose $\phi_1^{(k)}, \dots, \phi_{n(k)}^{(k)} \in H^{1/2}_\omega(\partial \Omega)$, $n(k)=\dim\ker(A-\lambda_k)$, such that the functions 
$$e_i^{(k)} = \tau_k^{-1}\bigl(\Res_{\lambda_k}^\omega M \bigr)\phi_{i}^{(k)}, \qquad 1 \leq i \leq n(k),$$ 
form an orthonormal basis in $\ker(A-\lambda_k)$. Then the orthogonal projection $E_k$ in $L^2(\Omega)$ onto $\ker (A - \lambda_k)$ is given by
\begin{align*}
 E_k u = \sum_{i = 1}^{n(k)} \big(u, e_i^{(k)} \big) e_i^{(k)}, \qquad u \in L^2(\Omega).
\end{align*}
Since the spectrum of $A$ consists only of eigenvalues with finite-dimensional eigen\-spaces 
we conclude that $A$ can be represented in the form
\begin{align*}
 A u = \sum_{k = 1}^\infty \lambda_k \sum_{i = 1}^{n(k)} \big(u, e_i^{(k)} \big) e_i^{(k)}, \qquad u \in \dom A.
\end{align*}
\end{proof}

\begin{remark}
Since $M$ is a holomorphic operator function whose values are bounded operators from 
$H^{1/2} (\partial \Omega) \to H^{-1/2} (\partial \Omega)$ it is sufficient to assume knowledge of $M(\lambda) \phi$ 
in Theorem~\ref{uniqueness} and Theorem~\ref{reconstruction} for $\phi$ in an arbitrary dense subspace of 
$H^{1/2}_\omega(\partial\Omega)$ and 
$\lambda$ in a discrete set of points $\cD$ with an accumulation point in $\rho(A)$. In particular, since 
the spectrum of $A$ is discrete, it is possible to choose $\cD$ as a sequence in any nonempty, open subset of $\R$.
\end{remark}

\section*{Appendix}

In this appendix we recall some definitions and basic facts on (unbounded) operators in Hilbert spaces and Banach space-valued 
mappings, which can be found in, e.g., the monographs \cite{AG93,C91,DS71,H82,RS80}.

\subsection*{Linear operators in Hilbert spaces}
Let $\cH$ and $\cK$ be Hilbert spaces with scalar products $(\cdot,\cdot)_\cH$ and $(\cdot,\cdot)_\cK$, and let $T$ be a 
linear operator from $\cH$ to $\cK$.
We write $\dom T$, $\ker T$ and $\ran T$ for the domain, kernel and range of $T$, respectively.
The operator $T$ is said to be {\it closed} if its graph is a closed subspace of $\cH\times\cK$. 
If  $\dom T$ is dense in $\cH$, then the
{\em adjoint operator} $T^*$ is defined by $T^*g:=\tilde g$, where
\begin{displaymath}
\dom T^*=\bigl\{g\in\cK:\text{exists}\,\,\tilde g\in\cH\,\,\text{such that}\,\,
(Tf,g)_\cK=(f,\tilde g)_\cH
\,\,\text{for all}\,\,f\in\dom T\bigr\}.
\end{displaymath}
Observe that $T^*$ is a closed linear operator from $\cK$ to $\cH$. Moreover, $\dom T^*$ is dense in $\cK$ if and only if 
the closure $\overline T$ of (the graph of) $T$ is an operator, and in this case $T^{**}$ coincides with  
$\overline T$. 

Let $A$ be
a linear operator in $\cH$. Then $A$ is said to be {\it symmetric} if the relation
$(Af,g)_\cH=(f,Ag)_\cH$
holds for all $f,g\in\dom A$. If $A$ is densely defined, then $A$ is symmetric if and only if 
the adjoint operator $A^*$ is an extension of $A$, that is, $\dom A\subseteq \dom A^*$
and $A^*f=Af$ holds for all $f\in\dom A$; in short $A\subseteq A^*$.
If $A=A^*$ holds, then the operator $A$ is called {\it selfadjoint}. Recall that a symmetric operator
$A$ is selfadjoint if and only if $\ran(A-\lambda_\pm)=\cH$ holds for some $\lambda_\pm\in\dC^\pm$ and that for
a closed symmetric operator $A$ and $\lambda\in\dC\setminus\dR$ the operator $(A-\lambda)^{-1}$ 
defined on the closed subspace $\ran(A-\lambda)$ is bounded by $\vert\Imag\,\lambda\vert^{-1}$.

Let $S$ be a closed linear operator in $\cH$. The {\it resolvent set} $\rho(S)$ consists of all $\lambda\in\dC$ such that
$(S-\lambda)^{-1}$ is a bounded operator defined on $\cH$. Note that the set $\rho(S)$ is open in $\dC$. The {\em spectrum} 
$\sigma(S)$ of $S$ is the complement of $\rho(S)$ in $\dC$, in particular, $\sigma(S)$ contains the eigenvalues of $S$, i.e., 
the points $\lambda\in\dC$ such that
$\ker(S-\lambda)\not=\{0\}$ holds. 
Recall that for a symmetric operator $A$ in $\cH$ the eigenvalues are real 
and that for a selfadjoint operator $A$ the whole spectrum $\sigma(A)$ is contained in $\dR$.
Moreover, by the spectral theorem the spectrum of a selfadjoint operator $A$ is empty if and only if the Hilbert space $\cH$ is trivial.

\subsection*{Holomorphic functions with values in Banach spaces}
Let $X$ be a Banach space and let $D\subseteq\dC$ be an open set.
If the function $m : D \to X$ is holomorphic on $D$ and $\mu \in \C$ is a pole of $m$, then the {\em residual} of $m$ at $\mu$ is 
\begin{align*}
 \Res_{\mu} m = \frac{1}{2 \pi i}  \int_{\Gamma_{\mu}} m(\zeta) \d \zeta,
\end{align*}
where $\Gamma_{\mu}$ denotes a closed Jordan curve with interior $\cO$ which contains $\mu$, such that 
$\cO \setminus \{\mu\}\subseteq D$. Equivalently, $\Res_{\mu} m$ is the first coefficient of negative 
order in the Laurent series expansion of $m$ at $\mu$. If $A$ is a selfadjoint operator in a Hilbert space 
$\cH$, then $\lambda \mapsto R_A(\lambda)=(A - \lambda)^{-1}$ is a holomorphic function on $\rho(A)$ 
with values in the space of bounded linear operators in $\cH$. Here each isolated point $\mu$ in $\sigma(A)$ 
is a pole of $R_A$ of order one and the orthogonal projection $E_\mu$ in $\cH$ onto the eigenspace $\ker(A - \mu)$ is
\begin{equation*}
E_\mu= - \Res_\mu R_A= - \frac{1}{2\pi i}\int_{\Gamma_\mu}(A-\zeta)^{-1} \d \zeta.
\end{equation*}


\begin{thebibliography}{99}

\bibitem{AG93} N.\,I.~Akhiezer and I.\,M.~Glazman, Theory of Linear Operators in Hilbert Space, Dover publications, 1993.

\bibitem{AB09} D.~Alpay and J.~Behrndt,
{\it Generalized $Q$-functions and Dirichlet-to-Neumann maps for elliptic differential operators},
J.\ Funct.\ Anal. 257 (2009), 1666--1694.

\bibitem{AP04} W.\,O.~Amrein and D.\,B.~Pearson,
{\it $M$-operators: a generalisation of Weyl--Titchmarsh theory},
J.\ Comput.\ Appl.\ Math. 171 (2004), 1--26.

\bibitem{AE10} W.~Arendt and A.\,F.\,M.~ter~Elst, {\it The Dirichlet-to-Neumann operator on rough domains}, Preprint, arXiv:1005.0875.

\bibitem{AM08} W.~Arendt and R.~Mazzeo, {\it Spectral properties of the Dirichlet-to-Neumann operator on Lipschitz domains}, Ulmer Seminare 12 (2007), 23--38.

\bibitem{A00} Yu.~Arlinski\u{\i},
{\it Abstract boundary conditions for maximal sectorial extensions of sectorial operators},
Math.\ Nachr. 209 (2000), 5--36.

\bibitem{A57} N.~Aronszajn, {\it A unique continuation theorem for solutions of elliptic partial differential equations or inequalities of second order},  J. Math. Pures Appl. 36  (1957), 235--249.

\bibitem{AKS62} N.~Aronszajn, A.~Krzywicki, and J.~Szarski, {\it A unique continuation theorem for exterior differential forms on Riemannian manifolds}, Ark. Mat. 4 (1962), 417--453.

\bibitem{ALP05} K.~Astala, M.~Lassas, and L.~P\"{a}iv\"{a}rinta, {\it Calder\'{o}n's inverse problem for anisotropic conductivity in the plane}, Comm. Partial Differential Equations 30 (2005), 207--224.

\bibitem{AP06} K.~Astala and L.~P\"{a}iv\"{a}rinta, {\it Calder\'on's inverse conductivity problem in the plane}, Ann. of Math. (2) 163 (2006), 265--299.

\bibitem{BL07} J.~Behrndt and M.~Langer, {\it Boundary value problems for elliptic partial differential operators
on bounded domains}, J. Funct. Anal. 243 (2007), 536--565.

\bibitem{BL10} J.~Behrndt and M.~Langer, {\it Elliptic operators, Dirichlet-to-Neumann maps and quasi boundary triples}, submitted.

\bibitem{B88} M.I.~Belishev, {\it An approach to multidimensional inverse problems for the wave equation}, 
(Russian) Dokl. Akad. Nauk SSSR 297 (1987), 524--527; translation in Soviet Math. Dokl. 36 (1988), 481--484.

\bibitem{B97} M.I.~Belishev, {\it Boundary control in reconstruction of manifolds and metrics (the BC method)}, Inverse Problems 13 (1997), R1--R45.

\bibitem{BK92} M.I.~Belishev and Y.~Kurylev, {\it To the reconstruction of a Riemannian manifold via its spectral data (BC-method)}, 
Comm. Partial Differential Equations 17 (1992), 767--804.

\bibitem{BGW09} M.~Brown, G.~Grubb, and I.~Wood, {\it M-functions for closed extensions of adjoint pairs of operators with 
applications to elliptic boundary problems},  Math. Nachr.  282  (2009), 314--347. 

\bibitem{BHMNW09} M.~Brown, J.~Hinchcliffe, M.~Marletta, S.~Naboko and I.~Wood,
{\it The abstract Titchmarsh--Weyl $M$-function for adjoint operator pairs and
its relation to the spectrum},
Integral Equations Operator Theory 63 (2009), 297--320.

\bibitem{BMNW08} M.~Brown, M.~Marletta, S.~Naboko, and I.~Wood, {\it Boundary triplets and $M$-functions for non-selfadjoint operators, with applications to elliptic PDEs and block operator matrices},  J. Lond. Math. Soc. (2)  77  (2008), 700--718.

\bibitem{BU02} A.\,L.~Bukhgeim and G.~Uhlmann, {\it Recovering a potential from partial Cauchy data}, Comm. Partial Differential Equations 27 (2002), 653--668.

\bibitem{C80} A.\,P.~Calder\'on, {\it On an inverse boundary value problem}, Seminar on Numerical Analysis and its Applications to Continuum Physics (Rio de Janeiro, 1980),  pp. 65--73, Soc. Brasil. Mat., Rio de Janeiro, 1980.

\bibitem{C91} J.\,B.~Conway, A Course in Operator Theory, Grad. Stud. Math. 21, Amer. Math. Soc., Providence, 1991.

\bibitem{DHMS06} V.\,A.~Derkach, S.~Hassi, M.\,M.~Malamud, and H.\,S.\,V.~de~Snoo,
{\it Boundary relations and their Weyl families},
Trans. Amer. Math. Soc. 358 (2006), 5351--5400.  

\bibitem{DM91} V.\,A.~Derkach and M.\,M.~Malamud, 
{\it Generalized resolvents and the boundary value problems for Hermitian operators with gaps}, 
J. Funct. Anal. 95 (1991), 1--95.

\bibitem{DM95} V.\,A.~Derkach and M.\,M.~Malamud, 
{\it The extension theory of Hermitian operators and the moment problem}, 
J. Math. Sciences 73  (1995), 141--242.

\bibitem{DLS87} A.~Dijksma, H.~Langer, and H.~de Snoo, {\it Representations of holomorphic operator functions by means of resolvents of 
unitary or selfadjoint operators in Krein spaces}, Oper. Theory Adv. Appl. 24 (1987), 123--143. 

\bibitem{DLS93} A.~Dijksma, H.~Langer, and H.~de Snoo, {\it Eigenvalues and pole functions of Hamiltonian systems with 
eigenvalue depending boundary conditions}, Math. Nachr. 161 (1993), 107--154. 

\bibitem{DKSU09} D.~Dos Santos Ferreira, C.\,E.~Kenig, M.~Salo, and G.~Uhlmann, {\it Limiting Carleman weights and anisotropic inverse problems}, Invent. Math. 178 (2009), 119--171.

\bibitem{DS71} N.~Dunford and J.\,T.~Schwartz, Linear Operators I, Wiley, New York, 1958.

\bibitem{EE87} D.\,E. Edmunds and W.\,D. Evans, Spectral Theory and Differential Operators, Clarendon Press, Oxford, 1987.

\bibitem{EM03} W.\,N.~Everitt and L.~Markus,
{\it Elliptic partial differential operators and symplectic algebra},
Mem.\ Amer.\ Math.\ Soc. 162 (2003), no.~770.

\bibitem{GM08} F.~Gesztesy and M.~Mitrea, {\it Generalized Robin boundary conditions, Robin-to-Dirichlet maps, and Krein-type resolvent formulas for Schr\"{o}dinger operators on bounded Lipschitz domains}, 
Proc. Sympos. Pure Math. 79, 105--173, Amer. Math. Soc., 2008.

\bibitem{GM09-2} F.~Gesztesy and M.~Mitrea, {\it Nonlocal Robin Laplacians and some remarks on a paper by Filonov on eigenvalue inequalities}, J. Differential Equations 247 (2009), 2871--2896.

\bibitem{GM09} F.~Gesztesy and M.~Mitrea, {\it Robin-to-Robin maps and
Krein-type resolvent formulas for Schr\"{o}dinger operators on bounded
Lipschitz domains}, Oper. Theory Adv. Appl. 191 (2009), 81--113.

\bibitem{GM11} F.~Gesztesy and M.~Mitrea, {\it A description of all self-adjoint extensions of the Laplacian and Krein type 
resolvent formulas on non-smooth domains}, J. Math. Anal 113 (2011), 53--172.

\bibitem{GMZ07} F.~Gesztesy, M.~Mitrea, and M.~Zinchenko, {\it Multi-dimensional versions of a determinant formula due to Jost and Pais}, Rep. Math. Phys. 59 (2007), 365--377.

\bibitem{GMZ07-2} F.~Gesztesy, M.~Mitrea, and M.~Zinchenko, {\it Variations on a theme of Jost and Pais}, J. Funct. Anal. 253 (2007), 399--448.

\bibitem{G68} G.~Grubb,
{\it A characterization of the non-local boundary value problems associated with an elliptic operator},
Ann.\ Scuola Norm.\ Sup.\ Pisa (3) 22 (1968), 425--513.

\bibitem{GBook09} G.~Grubb,
Distributions and Operators,
Graduate Texts in Mathematics 252, Springer, New York, 2009.

\bibitem{H82} H.\ Heuser, Functional Analysis, John Wiley \& Sons, Ltd., Chichester, 1982.

\bibitem{H83} L.~H\"{o}rmander, {\em Uniqueness theorems for second order elliptic differential equations}, Comm. Partial Differential 
Equations 8 (1983), 21--64.

\bibitem{IUY10} O.~Imanuvilov, G.~Uhlmann, and M.~Yamamoto,
{\it The Calder\'on problem with partial data in two dimensions},
J. Amer. Math. Soc. 23 (2010), 655--691.

\bibitem{J00} P.~Jonas, {\it Operator representations of definitizable functions}, Ann. Acad. Sci. Fenn. Math. 25 (2000), 41--72.

\bibitem{KK98} A.~Katchalov and Y.~Kurylev, {\it Multidimensional inverse problem with incomplete boundary spectral data}, 
Comm. Partial Differential Equations 23 (1998), 55--95.

\bibitem{KKL01} A.~Katchalov, Y.~Kurylev, and M.~Lassas, Inverse Boundary Spectral Problems, Chapman and Hall/CRC Monogr. and Surv. in Pure and Appl. Math., 123, Chapman and Hall/CRC, 2001.

\bibitem{KSU07} C.\,E.~Kenig, J.~Sj\"{o}strand, and G.~Uhlmann, {\it The Calder\'{o}n problem with partial data},
Ann. of Math. (2)  165  (2007), 567--591.


\bibitem{K49} M.\,G.~Krein, {\it Basic propositions of the theory of representation of Hermitian operators with deficiency index (m,m)}, Ukrain. Mat. Zh. 1 (1949), 3--66.

\bibitem{KL06} Y.~Kurylev and M.~Lassas, {\it Multidimensional Gelfand inverse boundary spectral problem: uniqueness and stability}, Cubo 8 (2006), 41--59.

\bibitem{LT77} H.~Langer and B.~Textorius, {\it On generalized resolvents and $Q$-functions of symmetric linear relations (subspaces) in Hilbert space}, Pacific J. Math.  72 (1977), 135--165.

\bibitem{LTU03} M.~Lassas, M.~Taylor, and G.~Uhlmann, {\it The Dirichlet-to-Neumann map for complete Riemannian manifolds with boundary}, Comm. Anal. Geom. 11 (2003), 207--221.

\bibitem{LU01} M.~Lassas and G.~Uhlmann, {\it Determining Riemannian manifold from boundary measurements}, Ann. Sci. Ec. Norm. Super. 34 (2001), 771--787.

\bibitem{LU89} J.~Lee and G.~Uhlmann, {\it Determining anisotropic real-analytic conductivities by boundary measurements}, Comm. Pure Appl. Math. 42 (1989), 1097--1112.

\bibitem{LM72} J.\,L. Lions and E. Magenes, Non-Homogenous Boundary Value Problems and Applications, Volume 1, Springer, 1972.

\bibitem{M10} M.\,M.~Malamud,
{\it Spectral theory of elliptic operators in exterior domains},
Russ.\ J.\ Math.\ Phys. 17 (2010), 96--125.

\bibitem{MS07} J.~Malinen and O.\,J.~Staffans,
{\it Impedance passive and conservative boundary control systems},
Complex Anal.\ Oper.\ Theory 1 (2007), 279--300.

\bibitem{M00} W.~McLean, Strongly Elliptic Systems and Boundary Integral Equations, Cambridge University Press, 2000.

\bibitem{N88} A.~Nachman, \textit{Reconstructions from boundary measurements}, Ann. of Math. (2) 128 (1988), 531--576.

\bibitem{N96} A.~Nachman, \textit{Global uniqueness for a two-dimensional inverse boundary value problem}, Ann. of Math. (2) 143 (1996), 71--96.

\bibitem{NS10} A.~Nachman and B.~Street, {\it Reconstruction in the Calder\'on problem with partial data}, Comm. Partial Differential Equations 35 (2010), 375--390.

\bibitem{NSU88} A.~Nachman, J.~Sylvester, and G.~Uhlmann, {\it An $n$-dimensional Borg-Levinson theorem}, Comm. Math. Phys. 115 (1988), 595--605.



\bibitem{P08} A.~Posilicano,
{\it Self-adjoint extensions of restrictions},
Oper.\ Matrices 2 (2008), 483--506.


\bibitem{P07} O.~Post,
{\it First-order operators and boundary triples},
Russ.\ J.\ Math.\ Phys. 14 (2007), 482--492.

\bibitem{RS80} M.~Reed and B.~Simon, Methods of Modern Mathematical Physics. I. Functional Analysis, Academic Press, New York, 1980.

\bibitem{R07} V.~Ryzhov, {\it A general boundary value problem and its Weyl function}, Opuscula Math.  27  (2007), 305--331.


\bibitem{R09} V.~Ryzhov,
{\it Weyl--Titchmarsh function of an abstract boundary value problem, operator colligations,
and linear systems with boundary control},
Complex Anal.\ Oper.\ Theory 3 (2009), 289--322.

\bibitem{SU03} Z.~Sun and G.~Uhlmann, {\it Anisotropic inverse problems in two dimensions}, Inverse Problems 19 (2003), 1001--1010.

\bibitem{S90} J.~Sylvester, {\it An anisotropic inverse boundary value problem}, Comm. Pure Appl. Math. 43 (1990), 201--232.


\bibitem{SU87} J.~Sylvester and G.~Uhlmann, {\it A global uniqueness theorem for an inverse boundary value problem}, Ann. of Math. (2) 125 (1987), 153--169.

\bibitem{SU91} J.~Sylvester and G.~Uhlmann, {\it Inverse problems in anisotropic media}, Contemp. Math. 122 (1991), 105--117.

\bibitem{U09} G.~Uhlmann, {\it Electrical impedance tomography and Calder\'on's problem}, 
Inverse Problems 25 (2009), 123011.

\bibitem{W93} T.\,H.~Wolff, {\it Recent work on sharp estimates in second-order elliptic unique continuation problems}, J. Geom. Anal. 3 (1993), 621--650.

\end{thebibliography}
\end{document}